\documentclass[11pt,reqno,a4paper,oneside,11pt]{amsart}%
\usepackage{amsfonts}
\usepackage{mathrsfs}
\usepackage{amssymb}
\usepackage{amsmath}
\usepackage{amsthm}
\usepackage{graphicx}
\usepackage{color}
\usepackage{extarrows}

\setcounter{MaxMatrixCols}{30}
%SSIDATA{OutputFilter=latex2.dll}
%SSIDATA{Version=5.50.0.2890}
%SSIDATA{TCIstyle=Article/art1.lat,assart,assart}
%SSIDATA{LastRevised=Sunday, September 16, 2012 15:10:25}
%SSIDATA{<META NAME="GraphicsSave" CONTENT="32">}
%SSIDATA{<META NAME="SaveForMode" CONTENT="3">}
%SSIDATA{BibliographyScheme=Manual}
%SSIDATA{Language=American English}
%BeginMSIPreambleData
\providecommand{\U}[1]{\protect\rule{.1in}{.1in}}
%EndMSIPreambleData
\oddsidemargin 0pt \evensidemargin 0pt \textwidth 160mm
\textheight 230mm
\newtheorem{theorem}{Theorem}[section]
\newtheorem{corollary}[theorem]{Corollary}
\newtheorem{lemma}[theorem]{Lemma}

\newtheorem{remark}[theorem]{Remark}
\newtheorem{algorithm}[theorem]{Algorithm}
\newtheorem{example}[theorem]{Example}

\theoremstyle{definition}
\theoremstyle{remark}
\numberwithin{equation}{section}

%BeginMSIPreambleData
\ifx\pdfoutput\relax\let\pdfoutput=\undefined\fi
\newcount\msipdfoutput
\ifx\pdfoutput\undefined\else
\ifcase\pdfoutput\else
\msipdfoutput=1
\ifx\paperwidth\undefined\else
\ifdim\paperheight=0pt\relax\else\pdfpageheight\paperheight\fi
\ifdim\paperwidth=0pt\relax\else\pdfpagewidth\paperwidth\fi
\fi\fi\fi
%EndMSIPreambleData
\begin{document}
\pagestyle{myheadings}

\begin{center}
{\huge \textbf{  A Sylvester-type matrix equation over the Hamilton quaternions with an application}}\footnote{This research was supported by National Natural
Science Foundation of China grant (11971294 and 12171369)
\par
{}* Corresponding author. \par  Email address: wqw@t.shu.edu.cn, wqw369@yahoo.com (Q.W. Wang).}

\bigskip

{ \textbf{Long-Sheng Liu$^{a}$, Qing-Wen Wang$^{a, b *}$, Mahmoud Saad  Mehany}$^{a, c}$}\\
{\small
\vspace{0.25cm}
 a Department of Mathematics, Shanghai University, Shanghai 200444, P. R. China\\
 b Collaborative Innovation Center for the Marine Artificial Intelligence, Shanghai 200444, P. R. China\\
c Department of Mathematics, Ain Shams University, Cairo, 11566, A.R. Egypt \quad\quad}
\end{center}
\vspace{1cm}
\begin{quotation}
\noindent\textbf{Abstract:}
 We derive the solvability conditions and a formula of a general solution to a Sylvester-type matrix equation over Hamilton quaternions. As an application, we investigate the necessary and sufficient conditions for the solvability of the quaternion matrix equation,
which involves $\eta$-Hermicity. %MDPI: PLEASE CHECK.
We also provide an algorithm with a numerical example to illustrate the main results of this paper.

\vspace{3mm}

\noindent\textbf{Keywords:} matrix equation;  Hamilton quaternion; $\eta$-Hermitian matrix; Moore--Penrose inverse; rank\newline%
\noindent\textbf{2010 AMS Subject Classifications:\ }{\small 15A03; 15A09; 15A24; 15B33; 15B57 }
\end{quotation}
\section{\textbf{Introduction}}
Let $\mathbb{R}$ stand for the real number field and  
%MDPI: Please confirm if the bold in equation should be retained.
$$\mathbb{H}=\{u_{0}+u_{1}\mathbf{i}+u_{2}\mathbf{j}+u_{3}\mathbf{k} | \mathbf{i}^{2}=\mathbf{j}^{2}=\mathbf{k}^{2}=\mathbf{ijk}=-1,\ u_{0}, u_{1}, u_{2}, u_{3}\in \mathbb{R}\}.$$  $\mathbb{H} $ is called the Hamilton quaternion algebra, which is a non-commutative division ring.  Hamilton quaternions and Hermitian quaternion matrices have been utilized in statistics of quaternion random signals \cite{C.C 2011}, quaternion matrix optimization problems \cite{L.Q. 2021}, signal and color image processing,
face recognition  \cite{  Jia 2018, Wang X.X.},
and so on. 

Sylvester and Sylvester-type matrix equations have a large number of applications in different disciplines and fields. For example, the Sylvester matrix equation
\begin{align}\label{eq00}
A_{1}X+XB_{1}=C_{1}
\end{align}
and the Sylvester-type matrix equation
\begin{align}\label{eq2}
A_{1}X+YB_{1}=C_{1}
\end{align}
have been applied in singular system control \cite{Shahzad}, system design \cite{Syrmosw33}, perturbation theory \cite{Liw33}, sensitivity analysis \cite{Barraud}, $H_{\alpha}$-optimal control \cite{Saberiw33}, linear descriptor systems \cite{Darouach},
and control theory \cite{E.B. 2005}.   Roth \cite{W.E. 1952} gave the Sylvester-type matrix Equation \eqref{eq2} for the first time over the polynomial integral domain. Baksalary and Kala \cite{J.K. 1979} established the solvability conditions for Equation \eqref{eq2}  and gave an expression of its general solution. In addition, Baksalary and Kala  \cite{J.K. 1980} derived the necessary and sufficient  conditions for a two-sided Sylvester-type matrix equation
\begin{align}\label{eq3}
A_{11}X_{1}B_{11}+C_{11}X_{2}D_{11}=E_{11}
\end{align}
to be consistent. \"{O}zg\"{u}ler \cite{A.B. 1991} studied \eqref{eq3} over a principal ideal domain. Wang \cite{T28} investigated \eqref{eq3} over an arbitrary regular ring with an identity element. 

Due to the wide applications of quaternions, the investigations on Sylvester-type matrix equations have been extended to $\mathbb{H}$ in
the last decade (see, e.g., \cite{Jiang 2022, Liu,  WangThree,  xinliu01, xinliu03,  xinliu04, Wang 2019, Wang 2016}). 
They are applied for signal processing, color-image processing, and maximal invariant semidefinite or neutral subspaces,
etc. (see, e.g., \cite{ Rodman, Jia 2019,  He 2021,  S.F. 2013}). 
For instance, the general solution to Sylvester-type matrix Equation \eqref{eq2} can be used 
in color-image processing.
He \cite{He 2019} derived the matrix Equation \eqref{eq2} as an essential finding.
Roman \cite{Rodman} established the necessary and sufficient conditions for Equation \eqref{eq00} to have a solution. 
Kychei \cite{Kyrchei0112} investigated Cramer's rules to drive the necessary and sufficient  conditions for Equation \eqref{eq3} to be solvable. 
As an extension of Equations
\eqref{eq2} and \eqref{eq3}, Wang and He \cite{Wang 2012} gave the solvability conditions and the general solution to the Sylvester-type matrix equation
\begin{align}\label{eq4}
A_{1}X_{1}+X_{2}B_{1}+C_{3}X_{3}D_{3}+C_{4}X_{4}D_{4}=E_{1}
\end{align}
over the complex number field %MPID: PLEASE CHECK.
$\mathbb{C}$, which can be generalized to $\mathbb{H}$ and applicable in some Sylvester-type matrix equations over $\mathbb{H}$ (see, e.g., \cite{ wangronghao,  He 2019}). 

We know that in system and control theory, the more unknown matrices that a matrix equation has, the wider its application will be. Consequently, for the sake of developing theoretical studies and the applications mentioned above of Sylvester-type matrix equation  and their generalizations,  in this paper,
we aim to establish some necessary and sufficient conditions for the Sylvester-type matrix equation
\begin{align}\label{eq1}
A_{1}X_{1}+X_{2}B_{1}+A_{2}Y_{1}B_{2}+A_{3}Y_{2}B_{3}+A_{4}Y_{3}B_{4}=B
\end{align}
to have a solution in terms of the rank equalities and Moore--Penrose inverses of some coefficient quaternion matrices in Equation \eqref{eq1} over $\mathbb{H}.$  
We derive a formula of its general solution when it is solvable.  It is clear that Equation \eqref{eq1} provides a proper generalization of 
Equation \eqref{eq4}, and we carry out an algorithm with a numerical example to calculate the general solution of Equation \eqref{eq1}. As a special case of Equation \eqref{eq1}, we also obtain the solvability conditions and the general solution   for the two-sided Sylvester-type matrix equation
\begin{align}\label{eq6}
A_{11}Y_{1}B_{11}+A_{22}Y_{2}B_{22}+A_{33}Y_{3}B_{33}=T_{1}.
\end{align}%MDPI: Please check for indentation, the same as below

To the best of our knowledge, so far, 
there has been little information on the solvability conditions and an expression of the general solution to Equation \eqref{eq6} by using generalized inverses.

As usual, we use $A^{\ast}$ to denote the conjugate transpose of $A$. Recall that a quaternion matrix $A$,  for\ ${\eta}\in \{\mathbf{i},\mathbf{j},\mathbf{k}\}$, is said to be $\eta$-Hermitian if $A=A^{\eta^{\ast}}$, where $A^{\eta^{\ast}}=-\eta A^{\ast}\eta$~\cite{ F.Z 2011}. For more properties and information on  $\eta^{*}$-quaternion matrices, 
we refer to \cite {F.Z 2011}. %MDPI: PLEASE CHECK.
We know that $\eta$-Hermitian matrices have some applications in linear modeling and statistics of quaternion random signals \cite{C.C 2011,F.Z 2011}. As an application of Equation \eqref{eq1}, we establish some necessary and sufficient conditions for the  quaternion matrix equation
\begin{equation}\label{eq5}
\begin{aligned}
&A_{1}X_{1}+(A_{1}X_{1})^{\eta^{\ast}}+A_{2}Y_{1}A_{2}^{\eta^{\ast}}+A_{3}Y_{2}A_{3}^{\eta^{\ast}}+A_{4}Y_{3}A_{4}^{\eta^{\ast}}=B
\end{aligned}
\end{equation}
to be consistent. Moreover, we derive a formula of the general solution to Equation \eqref{eq5} where $B=B^{\eta^{\ast}}, Y_{i}=Y_{i}^{\eta^{\ast}}$ $(i=\overline{1,3})$ over $\mathbb{H}$.

The rest of this paper is organized as follows. In Section \ref{sec2}, we review some definitions and lemmas. In Section  \ref{sec3}, we establish some necessary and sufficient conditions for Equation \eqref{eq1} to have a solution. In addition, we give an expression of its general solution to Equation \eqref{eq1} when it is solvable. In Section  \ref{sec4}, as an application of 
Equation \eqref{eq1},  we consider some solvability conditions and the general  solution to Equation \eqref{eq5}, where  $Y_{i}=Y_{i}^{\eta^{\ast}}$ $(i=\overline{1,3})$. Finally, we give a brief conclusion to the paper in Section  \ref{sec5}.
\section{Preliminaries \label{sec2}}

Throughout this paper, $\mathbb{H}^{m\times n}$ stands for the space of all $m\times n$ matrices  over $\mathbb{H}$. The symbol $r(A)$ denotes the rank of $A$. $I$ and $0$ represent an identity matrix and a zero matrix of appropriate sizes, respectively. In general, $A^{\dagger}$ stands for the Moore--Penrose inverse of $A\in \mathbb{H}^{l\times k}$,  which is defined as the solution of $AYA=A, \  YAY=Y,\  (AY)^{\ast}=AY$ and $(YA)^{\ast}=YA.$ Moreover, $L_{A}=I-A^{\dagger}A$ and $R_{A}=I-AA^{\dagger}$ represent two projectors along~$A$.  

The following lemma is due to Marsaglia and Styan \cite{M.G 1974}, which can be generalized to~$\mathbb{H}$.
\begin{lemma}[\cite{M.G 1974}]\label{lem2}
	Let $A \in \mathbb{H}^{m\times n},$ $B \in \mathbb{H}^{m\times k}, $ $C \in \mathbb{H}^{l\times n}$, $D\in \mathbb{H}^{j\times k}$ and $E \in\mathbb{H}^{l\times i}$ be given. 
	Then,
	we have the following rank equality:
	$$\ r\begin{pmatrix}
	A&BL_{D}\\
	R_{E}C&0
	\end{pmatrix}=r\begin{pmatrix}
	A&B&0\\
	C&0&E\\
	0&D&0
	\end{pmatrix}-r(D)-r(E).
	$$
\end{lemma}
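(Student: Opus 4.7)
The plan is to establish the identity by rank-preserving block elementary operations that convert the right-hand matrix into a form whose rank visibly splits as $r(D) + r(E) + r\begin{pmatrix} A & BL_D \\ R_E C & 0 \end{pmatrix}$. Writing $M$ for the right-hand matrix, my first move would be to left-multiply $M$ by the invertible block matrix $\begin{pmatrix} I & 0 & -BD^\dagger\\ 0 & I & 0\\ 0 & 0 & I \end{pmatrix}$ and then right-multiply by $\begin{pmatrix} I & 0 & 0\\ 0 & I & 0\\ -E^\dagger C & 0 & I \end{pmatrix}$. Since $B - BD^\dagger D = BL_D$ and $C - EE^\dagger C = R_E C$, these rank-preserving operations convert $M$ into
\[
M' \;=\; \begin{pmatrix} A & BL_D & 0\\ R_E C & 0 & E\\ 0 & D & 0 \end{pmatrix}.
\]

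Next I would peel off the contributions of $E$ and $D$ one at a time. The third block column of $M'$ has column space $\{0\}\oplus \operatorname{Im}(E)\oplus\{0\}$; since $R_E$ is an orthogonal projector with $R_E E = 0$, one has $\operatorname{Im}(R_E)\cap\operatorname{Im}(E)=\{0\}$, so no nonzero vector in the column span of the first two block columns can land in $\{0\}\oplus\operatorname{Im}(E)\oplus\{0\}$. This yields
\[
r(M') \;=\; r(E) + r\begin{pmatrix} A & BL_D\\ R_E C & 0\\ 0 & D\end{pmatrix}.
\]
A dual argument applied to the bottom block row $(0,D)$ uses $DL_D = 0$ together with $L_D^2 = L_D$: any apparent overlap $u(BL_D) = vD$ between the relevant row spans vanishes upon right multiplication by $L_D$, giving
\[
r\begin{pmatrix} A & BL_D\\ R_E C & 0\\ 0 & D\end{pmatrix} \;=\; r(D) + r\begin{pmatrix} A & BL_D\\ R_E C & 0\end{pmatrix}.
\]
Chaining the three rank equalities produces the formula in the statement.

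The only step that is more than bookkeeping is the disjointness claim in the two peeling arguments, and this rests on the Moore--Penrose identities $R_E E = 0$, $DL_D = 0$, $L_D^2 = L_D$, and $R_E = R_E^{\ast}$, each of which is immediate from the four defining axioms. These axioms make sense in any $\ast$-algebra, so they hold verbatim over $\mathbb{H}$; consequently the classical real/complex proof adapts to the quaternionic setting without modification.
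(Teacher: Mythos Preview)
The paper does not give its own proof of this lemma; it is quoted from Marsaglia and Styan and used as a black box throughout. Your argument is correct and is essentially the classical one: the two block elementary operations you describe are rank-preserving and produce $M'$, and the two peeling steps are justified because $R_E$ and $L_D$ are idempotents satisfying $R_E E = 0$ and $D L_D = 0$, which forces the relevant column- and row-space intersections to be trivial. Since column rank over $\mathbb{H}$ is the dimension of the right $\mathbb{H}$-module generated by the columns, and the dimension formula $\dim(U+V)=\dim U+\dim V$ for $U\cap V=\{0\}$ holds over any division ring, the transfer to the quaternionic setting that you sketch at the end is indeed routine.
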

\begin{lemma}[\cite{Z.H. 2013}]\label{lemnew}
	Let $A \in \mathbb{H}^{m\times n}$ be given. 
	Then,
	\begin{align*}
	&\mathrm{(1)}\ \ \ (A^{\eta})^{\dagger}=(A^{\dagger})^{\eta},\ (A^{\eta^{\ast}})^{\dagger}=(A^{\dagger})^{\eta^{\ast}}.\ \\
	&\mathrm{(2)}\ \ \ r(A)=r(A^{\eta^{\ast}})=r(A^{\eta})=r(A^{\eta}A^{\eta^{\ast}})=r(A^{\eta^{\ast}}A^{\eta}).\\
	&\mathrm{(3)}\ \ \ (L_{A})^{\eta^{\ast}}=-\eta(L_{A})\eta=(L_A)^{\eta}=L_{A^{\ast}}=R_{A^{\eta^{\ast}}}.\\
	&\mathrm{(4)}\ \ \ (R_{A})^{\eta^{\ast}}=-\eta(R_{A})\eta=(R_A)^{\eta}=R_{A^{\ast}}=L_{A^{\eta^{\ast}}}.\\
	&\mathrm{(5)}\ \ \ (AA^{\dagger})^{\eta^{\ast}}=(A^{\dagger})^{\eta^{\ast}}A^{\eta^{\ast}}=(A^{\dagger}A)^{\eta}=A^{\eta}(A^{\dagger})^{\eta}.\\
	&\mathrm{(6)}\ \ \ (A^{\dagger}A)^{\eta^{\ast}}=A^{\eta^{\ast}}(A^{\dagger})^{\eta^{\ast}}=(AA^{\dagger})^{\eta}=(A^{\dagger})^{\eta}A^{\eta}.
	\end{align*}
\end{lemma}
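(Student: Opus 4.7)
The plan is to prove the six identities by direct algebraic calculation, using throughout that $\eta\in\{\mathbf{i},\mathbf{j},\mathbf{k}\}$ satisfies $\eta^{2}=-1$ and $\eta^{\ast}=-\eta$. The operations $A\mapsto A^{\eta}=-\eta A\eta$ and $A\mapsto A^{\eta^{\ast}}=-\eta A^{\ast}\eta$ are obtained from the identity and the conjugate transpose, respectively, by inner conjugation with the unit quaternion $\eta$. Since inner conjugation by a unit is a $\mathbb{R}$-algebra automorphism, it commutes with every operation built only from addition and multiplication, and it preserves Hermiticity through the relation $\eta^{\ast}=-\eta$. This single observation is the engine of the entire lemma; each identity becomes, after cancelling interior products of the form $\eta\cdot\eta=-1$, a restatement of a known fact about $A$ and $A^{\dagger}$.

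For item (1), I would verify the four Penrose conditions for the pair $\bigl(A^{\eta},(A^{\dagger})^{\eta}\bigr)$. Setting $X=A^{\eta}$ and $Y=(A^{\dagger})^{\eta}$, the products $XY=(-\eta A\eta)(-\eta A^{\dagger}\eta)=-\eta AA^{\dagger}\eta$ and, symmetrically, $YX=-\eta A^{\dagger}A\eta$ emerge from collapsing $\eta\cdot\eta=-1$ in the middle. The identities $XYX=X$ and $YXY=Y$ then reduce to $AA^{\dagger}A=A$ and $A^{\dagger}AA^{\dagger}=A^{\dagger}$, while $(XY)^{\ast}=XY$ and $(YX)^{\ast}=YX$ reduce, via $\eta^{\ast}=-\eta$ and Hermiticity of $AA^{\dagger}$ and $A^{\dagger}A$, to the Penrose symmetry conditions for $A$. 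The $\eta^{\ast}$-version is the same argument with $A^{\ast}$ in place of $A$.

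For (2), the rank equality $r(A^{\eta})=r(A^{\eta^{\ast}})=r(A)$ is immediate because left and right multiplication by the invertible quaternion $\eta$ are bijections on $\mathbb{H}^{m\times n}$, combined with $r(A)=r(A^{\ast})$. The last two equalities follow from the standard quaternion identity $r(B)=r(BB^{\ast})=r(B^{\ast}B)$ applied to $B=A^{\eta}$, once one notes $(A^{\eta})^{\ast}=-\eta A^{\ast}\eta=A^{\eta^{\ast}}$. Parts (3) and (4) follow by expanding $L_{A}=I-A^{\dagger}A$ and $R_{A}=I-AA^{\dagger}$, applying $-\eta(\cdot)\eta$ termwise, and using $-\eta I\eta=-\eta^{2}=I$ together with part~(1) to recognize the resulting expression as the $L$- or $R$-projector of the transformed matrix; Hermiticity of $AA^{\dagger}$ and $A^{\dagger}A$ is what allows the $\eta$-chain and the $\eta^{\ast}$-chain to coincide.

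Items (5) and (6) are obtained by the same bookkeeping. For instance, $(AA^{\dagger})^{\eta^{\ast}}=-\eta(AA^{\dagger})^{\ast}\eta=-\eta AA^{\dagger}\eta$; writing $-\eta AA^{\dagger}\eta=(-\eta A^{\ast}\eta)\cdot\eta^{-1}\cdot A^{\dagger}\eta$ and regrouping, or simply combining part~(1) with the Hermiticity identity $(AA^{\dagger})^{\ast}=(A^{\dagger})^{\ast}A^{\ast}$, matches $(A^{\dagger})^{\eta^{\ast}}A^{\eta^{\ast}}$; the remaining equalities in (5) and (6) follow the same pattern. The main obstacle is purely notational: one must carefully track the signs from $\eta^{2}=-1$ and the order-reversal that $\ast$ produces in the presence of the non-commutativity of $\mathbb{H}$. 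No deeper input is needed beyond the Penrose axioms for $A$, the standard quaternion rank equalities, and the elementary identities $\eta^{2}=-1$, $\eta^{\ast}=-\eta$.
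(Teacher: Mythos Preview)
The paper does not supply a proof of this lemma; it is quoted verbatim from \cite{Z.H. 2013} and used as a black box. Your direct verification---checking the four Penrose conditions for $(A^{\eta},(A^{\dagger})^{\eta})$ after collapsing interior products $\eta\cdot\eta=-1$, invoking the invertibility of $\eta$ for the rank statements, and reducing the projector identities to the Hermiticity of $AA^{\dagger}$ and $A^{\dagger}A$ together with $\eta^{\ast}=-\eta$---is precisely the standard argument one would give, and it is correct.
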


\begin{lemma}[\cite{T28}]\label{lem2.2}  
	Let $A_{ii}, B_{ii}$ and $C_{i}\ (i=1,2)$ be given matrices with suitable sizes over $\mathbb{H}$. $A_{1}=A_{22}L_{A_{11}},\ T=R_{B_{11}}B_{22}$, $F=B_{2
		2}L_{T},\ G=R_{A_{1}}A_{22}$. 
	Then,
	the following statements are equivalent:
	
	$\mathrm{(1)}$ The system 
	\begin{equation}\label{eq7}
	\begin{aligned}
	A_{11}X_{1}B_{11}=C_{1}, \ A_{22}X_{1}B_{22}=C_{2}
	\end{aligned}
	\end{equation}
	has a solution.
	
	$\mathrm{(2)}$
	$$A_{ii}A_{ii}^{\dagger}C_{i}B_{ii}^{\dagger}B_{ii}=C_{i}\ (i=1,2)$$
	and
	$$G(A_{22}^{\dagger}C_{2}B_{22}^{\dagger}-A_{11}^{\dagger}C_{1}B_{11}^{\dagger})F=0.$$
	
	$\mathrm{(3)}$
	\begin{align*}
	&r
	\begin{pmatrix}
	A_{ii} & C_{i} \\
	\end{pmatrix}
	=r(A_{ii}),\ r
	\begin{pmatrix}
	C_{i} \\
	B_{ii} \\
	\end{pmatrix}
	=r(B_{ii})\ (i=1,2),\\
	&r
	\begin{pmatrix}
	A_{11} & C_{1} & 0 \\
	A_{22} & 0 & -C_{2} \\
	0 & B_{11} & B_{22} \\
	\end{pmatrix}
	=r
	\begin{pmatrix}
	A_{11}\\
	A_{22} \\
	\end{pmatrix}
	+r(B_{11},\ B_{22}).
	\end{align*}
\end{lemma}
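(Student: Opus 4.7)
The plan is to prove the three equivalences (1)$\Leftrightarrow$(2)$\Leftrightarrow$(3) in sequence, leaning on the standard parametrisation of the general solution of a single two-sided matrix equation over $\mathbb{H}$ and, for the last equivalence, on the Marsaglia--Styan style block-rank formula of Lemma~\ref{lem2}.

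For (1)$\Rightarrow$(2) I would argue that if $X_{1}$ solves both equations in \eqref{eq7}, then in particular each $A_{ii}X_{1}B_{ii}=C_{i}$ is consistent, which over $\mathbb{H}$ (where the Moore--Penrose inverse behaves exactly as in the complex case) forces $A_{ii}A_{ii}^{\dagger}C_{i}B_{ii}^{\dagger}B_{ii}=C_{i}$ for $i=1,2$. To get the compatibility identity, I would write the general solution of the first equation as $X_{1}=A_{11}^{\dagger}C_{1}B_{11}^{\dagger}+L_{A_{11}}U+VR_{B_{11}}$ with free parameters $U,V$ and substitute into $A_{22}X_{1}B_{22}=C_{2}$, producing the auxiliary Sylvester-type equation
\begin{equation*}
A_{1}UB_{22}+A_{22}VT=C_{2}-A_{22}A_{11}^{\dagger}C_{1}B_{11}^{\dagger}B_{22}.
\end{equation*}
Multiplying on the left by $R_{A_{1}}$ kills the $U$-term because $R_{A_{1}}A_{1}=0$, and multiplying on the right by $L_{T}$ kills the $V$-term because $TL_{T}=0$; after identifying $R_{A_{1}}A_{22}=G$, $B_{22}L_{T}=F$ and using $A_{22}A_{22}^{\dagger}G=G$, $FB_{22}^{\dagger}B_{22}=F$, the residue collapses to $G(A_{22}^{\dagger}C_{2}B_{22}^{\dagger}-A_{11}^{\dagger}C_{1}B_{11}^{\dagger})F=0$.

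For (2)$\Rightarrow$(1) I would run the same calculation in reverse. The condition just derived is precisely the consistency criterion for the auxiliary Sylvester-type equation $A_{1}UB_{22}+A_{22}VT=C_{2}-A_{22}A_{11}^{\dagger}C_{1}B_{11}^{\dagger}B_{22}$; selecting any solution pair $(U,V)$ and plugging it back into $X_{1}=A_{11}^{\dagger}C_{1}B_{11}^{\dagger}+L_{A_{11}}U+VR_{B_{11}}$ gives an explicit matrix that one then verifies satisfies both equations of \eqref{eq7}, using again $A_{11}L_{A_{11}}=0$ and $R_{B_{11}}B_{11}=0$ together with the consistency identities of (2).

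Finally, (2)$\Leftrightarrow$(3) is a pure rank translation. The two one-sided conditions $A_{ii}A_{ii}^{\dagger}C_{i}B_{ii}^{\dagger}B_{ii}=C_{i}$ are the classical reformulations of $r(A_{ii},C_{i})=r(A_{ii})$ together with $r\begin{pmatrix}C_{i}\\ B_{ii}\end{pmatrix}=r(B_{ii})$. The substantive content is to recast $G(A_{22}^{\dagger}C_{2}B_{22}^{\dagger}-A_{11}^{\dagger}C_{1}B_{11}^{\dagger})F=0$ as the single $3\times 3$ block-rank equality in (3). I would apply Lemma~\ref{lem2} to the rank of $G(A_{22}^{\dagger}C_{2}B_{22}^{\dagger}-A_{11}^{\dagger}C_{1}B_{11}^{\dagger})F$, which after the identifications $R_{A_{1}}$ on the left and $L_{T}$ on the right fits exactly the $R_{E}\,\bullet\,L_{D}$ template of that lemma, and then perform elementary block row and column operations to absorb the Moore--Penrose factors. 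I expect this last step --- the bookkeeping needed to clear every $A_{ii}^{\dagger}$ and $B_{ii}^{\dagger}$ so that only $A_{11},A_{22},B_{11},B_{22},C_{1},C_{2}$ survive in the final block matrix --- to be the main technical obstacle; the rest is either standard general-solution theory or routine manipulation.
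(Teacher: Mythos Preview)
The paper does not prove Lemma~\ref{lem2.2}: it is quoted verbatim from \cite{T28} and used as a black box (notably in the proof of Lemma~\ref{lem3.1}). So there is no ``paper's own proof'' to compare against.

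That said, your outline follows the standard route from \cite{T28} and is essentially sound. One point deserves more care. In the $(2)\Rightarrow(1)$ direction you assert that $G(A_{22}^{\dagger}C_{2}B_{22}^{\dagger}-A_{11}^{\dagger}C_{1}B_{11}^{\dagger})F=0$ is \emph{precisely} the consistency criterion for the auxiliary equation
\[
A_{1}UB_{22}+A_{22}VT=C_{2}-A_{22}A_{11}^{\dagger}C_{1}B_{11}^{\dagger}B_{22}=:E.
\]
For a generic two-term equation $AXB+CYD=E$ the Baksalary--Kala conditions are fourfold, not one. Here the remaining three collapse because of the built-in containments: $A_{1}=A_{22}L_{A_{11}}$ forces $\operatorname{col}(A_{1})\subseteq\operatorname{col}(A_{22})$, and $T=R_{B_{11}}B_{22}$ forces $\operatorname{row}(T)\subseteq\operatorname{row}(B_{22})$; together with $R_{A_{22}}E=0$ and $EL_{B_{22}}=0$ (which follow from the individual conditions $A_{ii}A_{ii}^{\dagger}C_{i}B_{ii}^{\dagger}B_{ii}=C_{i}$), this makes the other three Baksalary--Kala identities automatic. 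You should say this explicitly rather than leave it implicit. The rank translation $(2)\Leftrightarrow(3)$ via Lemma~\ref{lem2} is routine, and your description of the required block manipulations is accurate.
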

\begin{lemma}[\cite{J.K. 1979}]\label{lem2.3}  
	Let $A_{1}$, $B_{1}$ and $C_{1}$ be given matrices with suitable sizes.
	Then,
	the Sylvester-type Equation \eqref{eq2} is solvable if and only if
	$$R_{A_{1}}C_{1}L_{B_{1}}=0.$$
	In this case, the general solution to Equation \eqref{eq2} can be expressed as
	\begin{align*}
	&X=A_{1}^{\dagger}C_{1}-A_{1}^{\dagger}U_{1}B_{1}+L_{A_{1}}U_{2},\
	Y=R_{A_{1}}C_{1}B_{1}^{\dagger}+A_{1}A_{1}^{\dagger}U_{1}+U_{3}R_{B_{1}},
	\end{align*}
	where $U_{1}, U_{2}$,
	and $U_{3}$ are arbitrary matrices with appropriate sizes.
\end{lemma}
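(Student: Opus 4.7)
The plan is to prove \lemref{lem2.3} in three stages: necessity, existence of a particular solution (sufficiency), and parametrization of the full solution set, noting only at the end why the argument goes through verbatim over $\mathbb{H}$.

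First I would handle \emph{necessity}. If $(X,Y)$ solves $A_1 X + Y B_1 = C_1$, multiply the equation on the left by $R_{A_1}$ and on the right by $L_{B_1}$. Since $R_{A_1} A_1 = 0$ and $B_1 L_{B_1} = 0$, both terms on the left side are annihilated, and one reads off $R_{A_1} C_1 L_{B_1} = 0$. This is a one-line computation using only the defining properties of the Moore--Penrose inverse.

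Next I would construct a \emph{particular solution} under the standing assumption $R_{A_1} C_1 L_{B_1} = 0$. The identity $L_{B_1} = I - B_1^\dagger B_1$ rewrites the hypothesis as $R_{A_1} C_1 = R_{A_1} C_1 B_1^\dagger B_1$, which combined with $C_1 = A_1 A_1^\dagger C_1 + R_{A_1} C_1$ yields the decomposition
\begin{equation*}
C_1 = A_1\bigl(A_1^\dagger C_1\bigr) + \bigl(R_{A_1} C_1 B_1^\dagger\bigr) B_1 .
\end{equation*}
Thus $X_0 := A_1^\dagger C_1$ and $Y_0 := R_{A_1} C_1 B_1^\dagger$ form a particular solution, matching the ``$U_i = 0$'' specialization of the claimed formula.

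The heart of the argument is the \emph{parametrization of all solutions}. Writing an arbitrary solution as $(X_0 + \widetilde X, Y_0 + \widetilde Y)$, I reduce to classifying solutions of the homogeneous equation $A_1 \widetilde X + \widetilde Y B_1 = 0$. Multiplying on the left by $A_1 A_1^\dagger$ gives $A_1\widetilde X = - A_1 A_1^\dagger \widetilde Y B_1$, so $A_1\bigl(\widetilde X + A_1^\dagger \widetilde Y B_1\bigr) = 0$, which forces $\widetilde X = -A_1^\dagger \widetilde Y B_1 + L_{A_1} U_2$ for some $U_2$; setting $U_1 := \widetilde Y$ gives the asserted shape of $\widetilde X$. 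For $\widetilde Y$ itself, one rewrites $\widetilde Y = A_1 A_1^\dagger \widetilde Y + R_{A_1} \widetilde Y$; the residual term satisfies $R_{A_1} \widetilde Y B_1 = -R_{A_1} A_1 \widetilde X = 0$, and from $M B_1 = 0$ one concludes $M = M R_{B_1}$, so $R_{A_1} \widetilde Y = U_3 R_{B_1}$ with $U_3 := R_{A_1}\widetilde Y$. Conversely, a direct substitution shows that for \emph{any} $U_1, U_2, U_3$ of appropriate sizes, the cross-terms $\mp A_1 A_1^\dagger U_1 B_1$ cancel and the $L_{A_1}$ and $R_{B_1}$ factors kill the remaining contributions, so the displayed formulas always yield solutions.

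The potential obstacle is the converse direction of the parametrization: one must check that the choices of $U_1, U_2, U_3$ extracted above genuinely recover the original $(\widetilde X,\widetilde Y)$, i.e.\ that the three pieces are independent enough to hit every solution. This reduces to the two identities $\widetilde X = -A_1^\dagger \widetilde Y B_1 + L_{A_1}(\widetilde X + A_1^\dagger \widetilde Y B_1)$ and $R_{A_1}\widetilde Y = R_{A_1}\widetilde Y \cdot R_{B_1}$, both consequences of $A_1 L_{A_1} = 0$, $A_1 \widetilde X = -\widetilde Y B_1$, and the projector identities. Finally, because every manipulation used—$A A^\dagger A = A$, $(AA^\dagger)^\ast = AA^\dagger$, idempotence of $L_A, R_A$—holds verbatim for Moore--Penrose inverses over $\mathbb{H}$, the proof transfers from the complex case to the quaternionic setting without modification.
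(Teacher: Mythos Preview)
Your proof is correct and complete. The necessity step is immediate, the particular solution is verified by the decomposition $C_1 = A_1(A_1^\dagger C_1) + (R_{A_1}C_1 B_1^\dagger)B_1$, and your analysis of the homogeneous equation is sound: the key points are that $A_1\widetilde X = -\widetilde Y B_1$ forces $R_{A_1}\widetilde Y B_1 = 0$, hence $\widetilde X + A_1^\dagger \widetilde Y B_1 \in \ker A_1 = \operatorname{range}(L_{A_1})$ and $R_{A_1}\widetilde Y = (R_{A_1}\widetilde Y)R_{B_1}$, which recovers every homogeneous solution from suitable $U_1,U_2,U_3$.

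There is nothing to compare against: the paper does not prove \lemref{lem2.3} at all. It is quoted as a known preliminary result from Baksalary and Kala \cite{J.K. 1979}, and only \emph{used} (in the proof of Theorem~\ref{Thm1}, to reduce Equation~\eqref{eq1} to Equation~\eqref{eq11}). Your write-up therefore supplies a self-contained justification that the paper omits. One minor stylistic remark: when you set $U_1 := \widetilde Y$, it is worth making explicit that $U_2 := \widetilde X + A_1^\dagger \widetilde Y B_1$ and $U_3 := R_{A_1}\widetilde Y$ are the specific choices that reproduce the given $(\widetilde X,\widetilde Y)$, so the reader sees at a glance that the parametrization is surjective onto the homogeneous solution set.
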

\begin{lemma}[\cite{Wang 2012}]\label{lem2.5} 
	Let $A_{1}, B_{1}, C_{3}, D_{3}, C_{4}, D_{4}$ and $E_{1}$ be given matrices  over $\mathbb{H}$. Put
	\begin{align*}
	&A=R_{A_{1}}C_{3}, \ B=D_{3}L_{B_{1}},\ C=R_{A_{1}}C_{4},\ D=D_{4}L_{B_{1}},\\
	&E=R_{A_{1}}E_{1}L_{B_{1}}, \ M=R_{A}C,\ N=DL_{B},\ S=CL_{M}.
	\end{align*}
	Then,
	the following statements are equivalent:
	
	$\mathrm{(1)}$ Equation \eqref{eq4} has a solution.
	
	$\mathrm{(2)}$
	\begin{align*}
	R_{M}R_{A}E=0,\ EL_{B}L_{N}=0,\ R_{A}EL_{D}=0,\ R_EL_{B}=0.
	\end{align*}
	
	$\mathrm{(3)}$
	\begin{align*}
	&r
	\begin{pmatrix}
	E_{1} & C_{4} & C_{3} & A_{1} \\
	B_{1} & 0 & 0 & 0 \\
	\end{pmatrix}=r(B_{1})+r(C_{4},\ C_{3},\ A_{1}),\\
	&r
	\begin{pmatrix}
	E_{1} & A_{1} \\
	D_{3} & 0 \\
	D_{4} & 0 \\
	B_{1} & 0 \\
	\end{pmatrix}=r
	\begin{pmatrix}
	D_{3} \\
	D_{4} \\
	B_{1} \\
	\end{pmatrix}+r(A_{1}),\\
	&r
	\begin{pmatrix}
	E_{1} & C_{3} & A_{1} \\
	D_{4} & 0 & 0 \\
	B_{1} & 0 & 0 \\
	\end{pmatrix}
	=r(A_{1},\ C_3)+r
	\begin{pmatrix}
	D_{4} \\
	B_{1} \\
	\end{pmatrix}
	,\\
	&r
	\begin{pmatrix}
	E_{1} & C_{4} & A_{1} \\
	D_{3} & 0 & 0 \\
	B_{1} & 0 & 0 \\
	\end{pmatrix}=r(A_{1}\quad C_4)+r
	\begin{pmatrix}
	D_{3} \\
	B_{1} \\
	\end{pmatrix}.
	\end{align*}%MDPI: Please check for indentation
	In this case, the general solution to Equation \eqref{eq4} can be expressed as
	\begin{align*}
	&X_{1}=A_{1}^{\dagger}(E_{1}-C_{3}X_{3}D_{3}-C_{4}X_{4}D_{4})-A_{1}^{\dagger}T_{7}B_{1}+L_{A_{1}}T_{6},\\
	&X_{2}=R_{A_{1}}(E_{1}-C_{3}X_{3}D_{3}-C_{4}X_{4}D_{4})B_{1}^{\dagger}+A_{1}A_{1}^{\dagger}T_{7}+T_{8}R_{B_{1}},\\
	&X_{3}=A^{\dagger}EB^{\dagger}-A^{\dagger}CM^{\dagger}EB^{\dagger}-A^{\dagger}SC^{\dagger}EN^{\dagger}DB^{\dagger}-A^{\dagger}ST_{2}R_{N}DB^{\dagger}+L_{A}T_{4}+T_{5}R_{B},\\
	&X_{4}=M^{\dagger}ED^{\dagger}+S^{\dagger}SC^{\dagger}EN^{\dagger}+L_{M}L_{S}T_{1}+L_{M}T_{2}R_{N}+T_{3}R_{D},
	\end{align*}
	where $T_{1},...,T_{8}$ are arbitrary matrices with appropriate sizes over $\mathbb{H}$.
\end{lemma}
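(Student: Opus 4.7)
The plan is to reduce Equation \eqref{eq4} in a stepwise manner to equations already handled by Lemmas~\ref{lem2.3} and~\ref{lem2.2}. First I would treat $X_3, X_4$ as free parameters, so that \eqref{eq4} becomes the Sylvester-type equation $A_1 X_1 + X_2 B_1 = E_1 - C_3 X_3 D_3 - C_4 X_4 D_4$ in the pair $(X_1, X_2)$. By Lemma~\ref{lem2.3}, this is consistent in $(X_1, X_2)$ if and only if
$$R_{A_1}(E_1 - C_3 X_3 D_3 - C_4 X_4 D_4) L_{B_1} = 0,$$
which, given the definitions of $A, B, C, D, E$ in the statement, is exactly the reduced two-variable equation
$$A X_3 B + C X_4 D = E.$$
Hence \eqref{eq4} is consistent iff this reduced equation is consistent in $(X_3, X_4)$, and once such a pair has been found, the general $(X_1, X_2)$ is supplied by the formula of Lemma~\ref{lem2.3} applied with $C_1 := E_1 - C_3 X_3 D_3 - C_4 X_4 D_4$, producing the stated expressions for $X_1, X_2$ with free matrices $T_6, T_7, T_8$.

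Second, I would solve the reduced equation $A X_3 B + C X_4 D = E$. For fixed $X_4$, this is $A X_3 B = E - C X_4 D$; by the standard solvability criterion for a two-sided matrix equation, this forces both $R_A(E - C X_4 D) = 0$ and $(E - C X_4 D) L_B = 0$, equivalently $M X_4 D = R_A E$ and $C X_4 N = E L_B$ with $M = R_A C$ and $N = D L_B$. Applying Lemma~\ref{lem2.2} to this pair of simultaneous two-sided equations in $X_4$ yields the Moore--Penrose identities packaged in item~(2) and, combined with the explicit general solution of each subproblem, produces the claimed closed forms for $X_4$ and for $X_3 = A^{\dagger}(E - C X_4 D) B^{\dagger}$ plus homogeneous terms, expressed through $A^\dagger, B^\dagger, M^\dagger, N^\dagger, S^\dagger$ and the free parameters $T_1, \dots, T_5$.

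Finally, to obtain the rank-form conditions of item~(3), I would translate each Moore--Penrose identity in~(2) into a rank equality by repeated application of the Marsaglia--Styan identity (Lemma~\ref{lem2}), substituting $A = R_{A_1} C_3$, $B = D_3 L_{B_1}$, $C = R_{A_1} C_4$, $D = D_4 L_{B_1}$, $E = R_{A_1} E_1 L_{B_1}$, and simplifying each resulting block matrix until only the original data $A_1, B_1, C_3, D_3, C_4, D_4, E_1$ remain. The main obstacle is precisely this translation step: each rank identity is a nested block computation in which the projectors arising from the auxiliary matrices $M, N, S$ must be absorbed through Lemma~\ref{lem2}, and careful bookkeeping is required to verify that the four cleaned-up rank equalities in~(3) are equivalent to the four Moore--Penrose identities in~(2) without introducing any extraneous condition.
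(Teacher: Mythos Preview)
The paper does not prove Lemma~\ref{lem2.5}; it is quoted verbatim from \cite{Wang 2012} without proof, so there is no in-paper argument to compare your proposal against.

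Your outline is the standard reduction and is correct in spirit, but there is one genuine point you pass over. When you apply Lemma~\ref{lem2.2} to the system $MX_4D=R_AE$, $CX_4N=EL_B$, that lemma produces \emph{five} conditions (the four individual solvability conditions $R_MC_1=0$, $C_1L_D=0$, $R_CC_2=0$, $C_2L_N=0$ together with one cross/compatibility condition), whereas item~(2) of Lemma~\ref{lem2.5} lists only four. You must check that the compatibility condition is redundant here. It is, precisely because of the special structure $M=R_AC$ and $N=DL_B$: these force $ML_C=0$ and $R_DN=0$, so the compatibility condition (in the equivalent form $C_1B_{11}^{\dagger}B_{22}=A_{11}A_{22}^{\dagger}C_2$) reads $R_AE\,D^{\dagger}N=M\,C^{\dagger}EL_B$; using $R_AEL_D=0$ the left side equals $R_AEL_B$, and using $R_CEL_B=0$ the right side also equals $R_AEL_B$. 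Without this verification the sentence ``Lemma~\ref{lem2.2} yields the Moore--Penrose identities packaged in item~(2)'' is a gap. A second, smaller point: Lemma~\ref{lem2.2} as stated in the paper gives only solvability criteria, not a general-solution formula, so the explicit expressions for $X_3,X_4$ (involving $S=CL_M$) still have to be derived; this is essentially the extra work the paper later packages as Lemma~\ref{lem3.1}. (Incidentally, the last identity in item~(2) is a typo in the paper: $R_EL_B=0$ should read $R_CEL_B=0$.)
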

%%%%%%%%%%%%%%%%%%%%%%%%%%%%%%%%%%%%%%%%%%

%%%%%%%%%%%%%%%%%%%%%%%%%%%%%%%%%%%%%%%%%%
\section{Some Solvability Conditions and a Formula of the General Solution  \label{sec3}}
%MDPI:  Equation shouldn't be cited in title, please REMOVE

In this section, we establish the solvability conditions and a formula of the general solution to Equation \eqref{eq1}. We begin with the following  lemma, which is used to reach the main results of this paper.
\begin{lemma}\label{lem3.1}
	Let $A_{11}, B_{11}$, $C_{11}$,
	and $D_{11}$ be given matrices with suitable sizes over $\mathbb{H}$, $A_{11}L_{A_{22}}=0$ and $R_{B_{11}}B_{22}=0$.
	Set
	\begin{equation}\label{2}
	\begin{aligned}
	&A_{1}=A_{22}L_{A_{11}},\ C_{11}=C_{2}-A_{22}A_{11}^{\dagger}C_{1}B_{11}^{\dagger}B_{22}.
	\end{aligned}
	\end{equation}%MDPI: Please check for indentation, the same as below
	Then, the following statements are equivalent:
	
	$\mathrm{(1)}$ The system \eqref{eq7} is consistent.
	
	$\mathrm{(2)}$
	\begin{align*}
	R_{A_{ii}}C_{i}=0,\ C_{i}L_{B_{ii}}=0\ (i=1,2), \ R_{A_{1}}C_{11}=0.
	\end{align*}
	
	$\mathrm{(3)}$
	\begin{align*}
	A_{ii}A_{ii}^{\dagger}C_{i}B_{ii}^{\dagger}B_{ii}=C_{i} \ (i=1,2), \ C_{1}B_{11}^{\dagger}B_{22}=A_{11}A_{22}^{\dagger}C_{2}.
	\end{align*}
	
	$\mathrm{(4)}$
	\begin{align*}
	&r(A_{ii}, C_{i})=r(A_{ii}), r
	\begin{pmatrix}
	B_{ii} \\
	C_{i} \\
	\end{pmatrix}
	=r(B_{ii})\  (i=1,2),\\
	&r
	\begin{pmatrix}
	C_{1} & 0 & A_{11} \\
	0 & -C_{2} & A_{22} \\
	B_{11} & B_{22} & 0 \\
	\end{pmatrix}
	=r
	\begin{pmatrix}
	A_{22}
	\end{pmatrix}
	+r(B_{11}).
	\end{align*}
	In this case, the general solution to system \eqref{eq7} can be expressed as
	\begin{equation}\label{eq8}
	\begin{aligned}
	X_{1}=A_{11}^{\dagger}C_{1}B_{11}^{\dagger}+L_{A_{11}}A_{22}^{\dagger}C_{2}B_{22}^{\dagger}+L_{A_{22}}V_{1}+V_{2}R_{B_{11}}+L_{A_{11}}V_{3}R_{B_{22}},
	\end{aligned}
	\end{equation}
	where
	$V_{1}$, $V_{2}$,
	and $V_{3}$ are arbitrary matrices with appropriate sizes over $\mathbb{H}$.
\end{lemma}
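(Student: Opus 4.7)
The plan is to leverage Lemma \ref{lem2.2} together with the hypotheses $A_{11}L_{A_{22}}=0$ and $R_{B_{11}}B_{22}=0$, which are equivalent to the identities $A_{11}=A_{11}A_{22}^{\dagger}A_{22}$ and $B_{22}=B_{11}B_{11}^{\dagger}B_{22}$, i.e., to the subspace inclusions $\mathrm{row}(A_{11})\subseteq\mathrm{row}(A_{22})$ and $\mathrm{col}(B_{22})\subseteq\mathrm{col}(B_{11})$. I would establish the chain $(1)\Leftrightarrow(3)\Leftrightarrow(2)$ by direct Moore--Penrose manipulations, derive $(1)\Leftrightarrow(4)$ from Lemma \ref{lem2.2}, and finally verify the general solution formula \eqref{eq8}.

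For $(1)\Rightarrow(3)$, I would postmultiply $A_{11}X_{1}B_{11}=C_{1}$ by $B_{11}^{\dagger}B_{22}$, using $B_{11}B_{11}^{\dagger}B_{22}=B_{22}$, to get $A_{11}X_{1}B_{22}=C_{1}B_{11}^{\dagger}B_{22}$; and premultiply $A_{22}X_{1}B_{22}=C_{2}$ by $A_{11}A_{22}^{\dagger}$, using $A_{11}A_{22}^{\dagger}A_{22}=A_{11}$, to get $A_{11}X_{1}B_{22}=A_{11}A_{22}^{\dagger}C_{2}$. Equating yields the compatibility relation $C_{1}B_{11}^{\dagger}B_{22}=A_{11}A_{22}^{\dagger}C_{2}$ in (3), while the first parts of (3) are immediate. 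For $(3)\Rightarrow(1)$, I would check that $X_{0}:=A_{11}^{\dagger}C_{1}B_{11}^{\dagger}+L_{A_{11}}A_{22}^{\dagger}C_{2}B_{22}^{\dagger}$ is a particular solution; the only nontrivial check is $A_{22}X_{0}B_{22}=C_{2}$, which follows by expanding $A_{22}L_{A_{11}}=A_{22}-A_{22}A_{11}^{\dagger}A_{11}$ and invoking the compatibility identity from (3).

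The equivalence $(2)\Leftrightarrow(3)$ reduces to the respective third conditions, since $R_{A_{ii}}C_{i}=0\Leftrightarrow A_{ii}A_{ii}^{\dagger}C_{i}=C_{i}$ and symmetrically for $B_{ii}$. Substituting the compatibility relation into $C_{11}=C_{2}-A_{22}A_{11}^{\dagger}C_{1}B_{11}^{\dagger}B_{22}$ gives $C_{11}=A_{1}A_{22}^{\dagger}C_{2}\in\mathrm{range}(A_{1})$, hence $R_{A_{1}}C_{11}=0$; conversely, premultiplying $R_{A_{1}}C_{11}=0$ by $A_{11}A_{22}^{\dagger}$ and using $A_{11}A_{22}^{\dagger}A_{1}=A_{11}L_{A_{11}}=0$ extracts back the compatibility identity. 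For $(1)\Leftrightarrow(4)$, I would invoke Lemma \ref{lem2.2}: under our hypotheses $r\begin{pmatrix}A_{11}\\ A_{22}\end{pmatrix}=r(A_{22})$ and $r(B_{11},\,B_{22})=r(B_{11})$, while the block matrix appearing in Lemma \ref{lem2.2}(3) differs from the one in (4) only by a signed column permutation, so the two rank equalities coincide.

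Finally, for the parametrization \eqref{eq8}, after subtracting $X_{0}$ the problem reduces to parametrizing the homogeneous system $A_{11}XB_{11}=0$, $A_{22}XB_{22}=0$. The general solution of the first equation is $X=L_{A_{11}}U_{1}+U_{2}R_{B_{11}}$; substituting into the second and using $R_{B_{11}}B_{22}=0$ leaves the reduced constraint $A_{1}U_{1}B_{22}=0$, whose general solution is $U_{1}=L_{A_{1}}V_{1}+V_{3}R_{B_{22}}$. The main technical step is the projector identity $L_{A_{11}}L_{A_{1}}=L_{A_{22}}$, which holds because $\ker A_{22}\subseteq\ker A_{11}$ (dual to $\mathrm{row}(A_{11})\subseteq\mathrm{row}(A_{22})$) and hence $\ker A_{11}\cap\ker A_{1}=\ker A_{22}$, so both projectors have the same range. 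Assembling these pieces and relabelling $U_{2}=V_{2}$ recovers exactly formula \eqref{eq8}; the hard part is justifying this projector identity cleanly and ensuring that no solution is lost in the reduction.
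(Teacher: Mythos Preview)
Your proposal is correct and covers all four equivalences and the general-solution formula, but the route differs from the paper's in two places worth noting.

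For the equivalences, the paper proves $(1)\Leftrightarrow(2)$ by invoking Lemma~\ref{lem2.2} and then showing, via a chain of Moore--Penrose manipulations, that the Wang--style compatibility condition $G(A_{22}^{\dagger}C_{2}B_{22}^{\dagger}-A_{11}^{\dagger}C_{1}B_{11}^{\dagger})F=0$ collapses to $R_{A_{1}}C_{11}=0$; it then proves $(1)\Rightarrow(3)\Rightarrow(2)$ and derives $(2)\Leftrightarrow(4)$ from Lemma~\ref{lem2} by unpacking $r(C_{11},A_{1})=r(A_{1})$ into the block-rank identity. Your chain $(1)\Leftrightarrow(3)\Leftrightarrow(2)$ with $(1)\Leftrightarrow(4)$ read off directly from Lemma~\ref{lem2.2} is equally valid and arguably cleaner: your premultiplication trick $A_{11}A_{22}^{\dagger}R_{A_{1}}=A_{11}A_{22}^{\dagger}$ extracts the compatibility identity from $R_{A_{1}}C_{11}=0$ in one line, whereas the paper goes through several rewritings of $G(\cdots)F$.

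For the general solution, the approaches diverge more substantially. The paper verifies \eqref{eq8} by direct substitution and then, given any solution $X_{01}$, exhibits the explicit choices $V_{1}=X_{01}B_{22}B_{22}^{\dagger}$, $V_{2}=X_{01}$, $V_{3}=X_{01}B_{11}B_{11}^{\dagger}$ and checks algebraically that these reproduce $X_{01}$; this is elementary but opaque. Your approach---reducing to the homogeneous system, parametrizing $A_{11}XB_{11}=0$, and then imposing the second equation---is more structural and explains \emph{why} the formula has its shape. The projector identity $L_{A_{11}}L_{A_{1}}=L_{A_{22}}$ that you flag is indeed the crux; it follows because $A_{1}L_{A_{11}}=A_{1}$ forces $A_{1}^{\dagger}A_{1}$ and $A_{11}^{\dagger}A_{11}$ to be orthogonal projectors onto orthogonal subspaces, so $L_{A_{11}}L_{A_{1}}=I-A_{11}^{\dagger}A_{11}-A_{1}^{\dagger}A_{1}$ is the orthogonal projector onto $\ker A_{11}\cap\ker A_{1}=\ker A_{22}$. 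Your method generalizes more readily, while the paper's avoids any projector theory.
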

\begin{proof}
	$(1)\Leftrightarrow (2)$ It follows from Lemma \ref{lem2.2} that
	\begin{align*}
	&G(A_{22}^{\dagger}C_{2}B_{22}^{\dagger}-A_{11}^{\dagger}C_{1}B_{11}^{\dagger})F=0\\
	\Leftrightarrow &R_{A_{1}}(A_{1}+A_{22}A_{11}^{\dagger}A_{_{11}})A_{22}^{\dagger}C_{2}B_{22}^{\dagger}-A_{11}^{\dagger}C_{1}B_{11}^{\dagger}B_{22}=0\\
	\Leftrightarrow &R_{A_{1}}A_{22}A_{11}^{\dagger}A_{11}A_{22}^{\dagger}C_{2}B_{22}^{\dagger}B_{22}-A_{22}A_{11}^{\dagger}C_{1}B_{11}^{\dagger}B_{22}=0\\
	\Leftrightarrow &R_{A_{1}}A_{22}A_{11}^{\dagger}A_{11}A_{22}^{\dagger}A_{22}A_{22}^{\dagger}C_{2}B_{22}^{\dagger}B_{22}-A_{22}A_{11}^{\dagger}C_{1}B_{11}^{\dagger}B_{22}=0\\
	\Leftrightarrow &R_{A_{1}}(A_{22}-A_{1})A_{22}^{\dagger}A_{22}A_{22}^{\dagger}C_{2}B_{22}^{\dagger}B_{22}-A_{22}A_{11}^{\dagger}C_{1}B_{11}^{\dagger}B_{22}=0\\
	\Leftrightarrow &R_{A_{1}}C_{2}-A_{22}A_{11}^{\dagger}C_{1}B_{11}^{\dagger}B_{22}=0\Leftrightarrow R_{A_{1}}C_{11}=0,
	\end{align*}
	where $G$ and $F$ are given in Lemma \ref{lem2.2}.
	
	$(1)\Rightarrow (3)$ If the system \eqref{eq7} has a solution, then there exists a solution $X_{0}$ such that
	\begin{align*}
	A_{11}X_{0}B_{11}=C_{1}, \ A_{22}X_{0}B_{22}=C_{2}.
	\end{align*}
	It is easy to show that
	\begin{align*}
	R_{A_{ii}}C_{i}=0,\ C_{i}L_{B_{ii}}=0\ (i=1,2).
	\end{align*}
	Thus, $A_{ii}A_{ii}^{\dagger}C_{i}B_{ii}^{\dagger}B_{ii}=C_{i} \ (i=1,2). $ It follows from $R_{B_{11}}B_{22}=0$, $A_{11}L_{A_{22}}=0$ that
	\begin{align*}
	C_{1}B_{11}^{\dagger}B_{22}=A_{11}X_{0}B_{11}B_{11}^{\dagger}B_{22}=A_{11}X_{0}B_{22}=A_{11}A_{22}^{\dagger}A_{22}X_{0}B_{22}=A_{11}A_{22}^{\dagger}C_{2}.
	\end{align*}
	$(3)\Rightarrow (2)$ Since $A_{22}-A_{1}=A_{22}A_{11}^{\dagger}A_{11}$ and $C_{1}B_{11}^{\dagger}B_{22}=A_{11}A_{22}^{\dagger}C_{2}$, we have that
	\begin{align*}
	&R_{A_{1}}C_{11}=R_{A_{1}}C_{2}-R_{A_{1}}A_{22}A_{11}^{\dagger}C_{1}B_{11}^{\dagger}B_{22}= R_{A_{1}} C_{2}-R_{A_{1}}A_{22}A_{11}^{\dagger}A_{11}A_{22}^{\dagger}C_{2}\\
	&= R_{A_{1}} C_{2}-R_{A_{1}}(A_{22}-A_{1})A_{22}^{\dagger}C_{2} =R_{A_{1}}C_{2}-R_{A_{1}}A_{22}A_{22}^{\dagger}C_{2}=0.
	\end{align*}
	$(2)\Leftrightarrow (4)$ It follows from $R_{B_{11}}B_{22}=0$ and $A_{11}L_{A_{22}}=0$ that
	\begin{align*}
	r(B_{22},\ B_{11})=r(B_{11}),\ \begin{pmatrix}
	A_{11}\\
	A_{22}
	\end{pmatrix}=r(A_{22}).
	\end{align*}
	By Lemma \ref{lem2},
	\begin{align*}
	&R_{A_{ii}}C_{i}=0\Leftrightarrow r(R_{A_{ii}}C_{i})=0\Leftrightarrow r(A_{ii},\ C_{i})=r(A_{ii})\ (i=1,2),\\
	& C_{i}L_{B_{ii}}=0\Leftrightarrow r(C_{i}L_{B_{ii}})=0\Leftrightarrow r
	\begin{pmatrix}
	B_{ii} \\
	C_{i} \\
	\end{pmatrix}
	=r(B_{ii})\ (i=1,2),\\
	&R_{A_{1}}C_{11}=0\Leftrightarrow r(R_{A_{1}}C_{11})=0 \Leftrightarrow r(C_{11},\ A_{1})=r(A_{1})\\
	&\Leftrightarrow r
	\begin{pmatrix}
	C_{11} & A_{22}L_{A_{11}} \\
	R_{B_{11}}B_{22} & 0 \\
	\end{pmatrix}
	=r(A_{22}L_{A_{11}})+r(R_{B_{11}}B_{22})\\
	&\Leftrightarrow
	r
	\begin{pmatrix}
	C_{2}-A_{22}A_{11}^{\dagger}C_{1}B_{11}^{\dagger}B_{22} & A_{22} & 0 \\
	B_{22} & 0 & B_{11} \\
	0 & A_{11} & 0 \\
	\end{pmatrix}
	=r
	\begin{pmatrix}
	A_{11} \\
	A_{22} \\
	\end{pmatrix}
	+r(B_{11}, B_{22})\\
	&\Leftrightarrow r
	\begin{pmatrix}
	C_{1} & 0 & A_{11} \\
	0 & -C_{2} & A_{22} \\
	B_{11} & B_{22} & 0 \\
	\end{pmatrix}
	=r
	\begin{pmatrix}
	A_{11} \\
	A_{22} \\
	\end{pmatrix}
	+r(B_{11}, B_{22})=r(A_{22})+r(B_{11}).
	\end{align*}
	
	We now prove that $X_1$ in \eqref{eq8} is the general solution of the system \eqref{eq7}. We prove it in two steps. We show that $X_1$ is a solution of system \eqref{eq7} in Step 1.  In Step 2, if the system \eqref{eq7} is consistent, then the general solution to system \eqref{eq7} can be expressed as \eqref{eq8}.
	
	Step 1. In this step, we show that  $X_1$ is a solution of system \eqref{eq7}. Substituting $X_1$ in \eqref{eq8} into the system \eqref{eq7} yields
	\begin{equation}\label{2}
	\begin{aligned}
	A_{11}X_{1}B_{11}=A_{11}X_{0}B_{11},\ A_{22}X_{1}B_{22}=A_{22}X_{0}B_{22},
	\end{aligned}
	\end{equation}
	where $X_0=A_{11}^{\dagger}C_{1}B_{11}^{\dagger}+L_{A_{11}}A_{22}^{\dagger}C_{2}B_{22}^{\dagger}$. Since $R_{A_{11}}C_{1}=0$ and $C_{1}L_{B_{11}}=0$, we have that
	\begin{align*}
	&A_{11}X_{0}B_{11}=A_{11} A_{11}^{\dagger}C_{1}B_{11}^{\dagger}+L_{A_{11}}A_{1}^{\dagger}A_{22}A_{22}^{\dagger}C_{11}B_{22}^{\dagger}B_{11}\\
	&=A_{11}A_{11}^{\dagger}C_{1}B_{11}^{\dagger}B_{11}+A_{11}L_{A_{11}}A_{1}^{\dagger} C_{11}-R_{A_{22}}C_{11} B_{22}^{\dagger}B_{11}=A_{11}A_{11}^{\dagger}C_{1}B_{11}^{\dagger}B_{11}\\
	&=-R_{A_{11}}C_{1}B_{11}^{\dagger}B_{11}-C_{1}L_{B_{11}}+C_{1}=C_{1}.
	\end{align*}
	
	By
	\begin{align*}
	&R_{B_{11}}B_{22}=0,\ R_{A_{22}}C_{22}=0,\ C_{2}L_{B_{22}}=0\ \operatorname{and}\  C_{1}B_{11}^{\dagger}B_{22}=A_{11}A_{22}^{\dagger}C_{2},
	\end{align*}
	we have that
	\begin{align*}
	&A_{22}X_{0}B_{22}=A_{22}(A_{11}^{\dagger}C_{1}B_{11}^{\dagger}+L_{A_{11}}A_{22}^{\dagger}C_{2}B_{22}^{\dagger})B_{22}\\
	&=A_{22}A_{11}^{\dagger}C_{1}B_{11}^{\dagger}B_{22}+A_{22}A_{22}^{\dagger}C_{2}B_{22}^{\dagger}B_{22}-A_{22}A_{11}^{\dagger}A_{11}A_{22}^{\dagger}C_{2}B_{22}^{\dagger}B_{22}\\
	&=C_{2}+A_{22}A_{11}^{\dagger}C_{1}B_{11}^{\dagger}B_{22}-A_{22}A_{11}^{\dagger}C_{1}B_{11}^{\dagger}B_{22}=C_{2}.
	\end{align*}
	Thus,  $A_{11}X_{1}B_{11}=C_{1}$, $A_{22}X_{1}B_{22}=C_{2}$. $X_1$ is a solution of system \eqref{eq7}.
	
	Step 2. In this step, we show that the general solution to the system \eqref{eq7} can be expressed as \eqref{eq8}. It is sufficient to show that for an arbitrary solution, say, $X_{01}$ of \eqref{eq7}, $X_{01}$ can be expressed in form \eqref{eq8}. Put
	
	$$
	V_{1}=X_{01}B_{22}B_{22}^{\dagger},\ V_{2}=X_{01},\ V_{3}=X_{01}B_{11}B_{11}^{\dagger}.
	$$
	It follows from $B_{22}=B_{11}B_{11}^{\dagger}B_{22}$ and $A_{11}=A_{11}A_{22}^{\dagger}A_{22}$ that
	\begin{align*}
	&X_{1}=A_{11}^{\dagger}C_{1}B_{11}^{\dagger}+L_{A_{11}}A_{22}^{\dagger}C_{2}B_{22}^{\dagger}+L_{A_{22}}V_{1}+V_{2}R_{B_{11}}+L_{A_{11}}V_{3}R_{B_{22}}\\
	&=A_{11}^{\dagger}C_{1}B_{11}^{\dagger}+L_{A_{11}}A_{22}^{\dagger}C_{2}B_{22}^{\dagger}+L_{A_{22}}X_{01}B_{22}B_{22}^{\dagger}+X_{01}R_{B_{11}}+L_{A_{11}}X_{01}B_{11}B_{11}^{\dagger}R_{B_{22}}\\
	&=A_{11}^{\dagger}C_{1}B_{11}^{\dagger}+L_{A_{11}}A_{22}^{\dagger}C_{2}B_{22}^{\dagger}+X_{01}B_{22}B_{22}^{\dagger}-A_{22}^{\dagger}A_{22}X_{01}B_{22}B_{22}^{\dagger}+X_{01}-X_{01}B_{11}B_{11}^{\dagger}\\
	&+X_{01}B_{11}B_{11}^{\dagger}R_{B_{22}}-A_{11}^{\dagger}A_{11}X_{01}B_{11}B_{11}^{\dagger}R_{B_{22}}\\
	&=A_{11}^{\dagger}C_{1}B_{11}^{\dagger}+L_{A_{11}}A_{22}^{\dagger}C_{2}B_{22}^{\dagger}-X_{01}R_{B_{11}}B_{22}B_{22}^{\dagger}+X_{01}-A_{22}^{\dagger}A_{22}X_{01}B_{22}B_{22}^{\dagger}\\
	&-A_{11}^{\dagger}A_{11}X_{01}B_{11}B_{11}^{\dagger}+A_{11}^{\dagger}A_{11}X_{01}B_{11}B_{11}^{\dagger}B_{22}B_{22}^{\dagger}\\
	&=X_{01}+A_{11}^{\dagger}A_{11}X_{01}B_{11}B_{11}^{\dagger}B_{22}B_{22}^{\dagger}-A_{11}^{\dagger}A_{11}A_{22}^{\dagger}A_{22}X_{01}B_{22}B_{22}^{\dagger}\\
	&=X_{01}+A_{11}^{\dagger}A_{11}X_{01}B_{22}B_{22}^{\dagger}-A_{11}^{\dagger}A_{11}X_{01}B_{22}B_{22}^{\dagger}=X_{01}.
	\end{align*}
	Hence, $X_{01}$ can be expressed as \eqref{eq8}. To sum up, \eqref{eq8} is the general solution of the system~\eqref{eq7}.
\end{proof}
Now, we give the fundamental theorem of this paper.
\begin{theorem} \label{Thm1} Let $A_{i},$ $B_{i}$,
	and $B$\ $(i=\overline{1,4})$  be given quaternion matrices with appropriate sizes  over $\mathbb{H}$. Set
	\begin{align}
	&\begin{aligned}\label{21}
	&R_{A_{1}}A_{2}=A_{11},\ R_{A_{1}}A_{3}=A_{22},\ R_{A_{1}}A_{4}=A_{33},\ B_{2}L_{B_{1}}=B_{11},\ B_{22}L_{B_{11}}=N_{1},\\
	&B_{3}L_{B_{1}}=B_{22},\ B_{4}L_{B_{1}}=B_{33},\ R_{A_{11}}A_{22}=M_{1},\ S_{1}=A_{22}L_{M_{1}},\ R_{A_{1}}BL_{B_{1}}=T_{1},\\
	\end{aligned}\\
	&\begin{aligned}\label{22}
	&C=R_{M_{1}}R_{A_{11}},\ C_{1}=CA_{33},\ C_{2}=R_{A_{11}}A_{33},\ C_{3}=R_{A_{22}}A_{33},\ C_{4}=A_{33},\\
	&D=L_{B_{11}}L_{N_{1}},\ D_{1}=B_{33},\ D_{2}=B_{33}L_{B_{22}},\ D_{3}=B_{33}L_{B_{11}},\ D_{4}=B_{33}D,\\
	&E_{1}=CT_{1},\ E_{2}=R_{A_{11}}T_{1}L_{B_{22}},\ E_{3}=R_{A_{22}}T_{1}L_{B_{11}},\ E_{4}=T_{1}D,
	\end{aligned}\\
	&\begin{aligned}\label{23}
	&C_{11}=(L_{C_{2}},\ L_{C_{4}}),\ D_{11}=
	\begin{pmatrix}
	R_{D_{1}} \\
	R_{D_{3}} \\
	\end{pmatrix}
	,\ C_{22}=L_{C_{1}},\ D_{22}=R_{D_{2}},\ C_{33}=L_{C_{3}},\\
	&D_{33}=R_{D_{4}},\ E_{11}=R_{C_{11}}C_{22},\ E_{22}=R_{C_{11}}C_{33},\ E_{33}=D_{22}L_{D_{11}},\ E_{44}=D_{33}L_{D_{11}},\\
	&M=R_{E_{11}}E_{22},\ N=E_{44}L_{E_{33}},\ F=F_{2}-F_{1},\ E=R_{C_{11}}FL_{D_{11}},\ S=E_{22}L_{M},\\
	\end{aligned}\\
	&\begin{aligned}\label{24}
	&F_{11}=C_{2}L_{C_{1}},\ G_{1}=E_{2}-C_{2}C_{1}^{\dagger}E_{1}D_{1}^{\dagger}D_{2},\ F_{22}=C_{4}L_{C_{3}},\  G_{2}=E_{4}-C_{4}C_{3}^{\dagger}E_{3}D_{3}^{\dagger}D_{4},\\
	&F_{1}=C_{1}^{\dagger}E_{1}D_{1}^{\dagger}+L_{C_{1}}C_{2}^{\dagger}E_{2}D_{2}^{\dagger},\ F_{2}=C_{3}^{\dagger}E_{3}D_{3}^{\dagger}+L_{C_{3}}C_{4}^{\dagger}E_{4}D_{4}^{\dagger}.
	\end{aligned}
	\end{align}
	Then,
	the following statements are equivalent:
	
	$\mathrm{(1)}$ Equation \eqref{eq1} is consistent.
	
	$\mathrm{(2)}$
	\begin{equation}\label{new}
	\begin{aligned}
	R_{C_{i}}E_{i}=0,\ E_{i}L_{D_{i}}=0\ (i=\overline{1,4}),\ R_{E_{11}}EL_{E_{44}}=0.
	\end{aligned}
	\end{equation}

	$\mathrm{(3)}$
	\begin{small}
		\begin{align}
		&\begin{aligned}\label{2a}
		&r
		\begin{pmatrix}
		B & A_{2} & A_{3} & A_{4} & A_{1} \\
		B_{1} & 0 & 0 & 0 & 0 \\
		\end{pmatrix}
		=r(B_{1})+r(A_{2},\ A_{3},\ A_{4},\ A_{1}),
		\end{aligned}\\	
		&\begin{aligned}\label{2b}
		&r
		\begin{pmatrix}
		B & A_{2} & A_{4} & A_{1} \\
		B_{3} & 0 & 0 & 0 \\
		B_{1} & 0 & 0 & 0 \\
		\end{pmatrix}
		=r(A_{2},\ A_{4},\ A_{1})+r
		\begin{pmatrix}
		B_{3} \\
		B_{1} \\
		\end{pmatrix},
		\end{aligned}\\		
		&\begin{aligned}\label{2c}
		&r
		\begin{pmatrix}
		B & A_{3} & A_{4} & A_{1} \\
		B_{2} & 0 & 0 & 0 \\
		B_{1} & 0 & 0 & 0 \\
		\end{pmatrix}
		=r(A_{3},\ A_{4},\ A_{1})+r
		\begin{pmatrix}
		B_{2} \\
		B_{1} \\
		\end{pmatrix},
		\end{aligned}\\
		&\begin{aligned}\label{2d}
		&r
		\begin{pmatrix}
		B & A_{4} & A_{1} \\
		B_{2} & 0 & 0 \\
		B_{3} & 0 & 0 \\
		B_{1} & 0 & 0 \\
		\end{pmatrix}
		=r
		\begin{pmatrix}
		B_{2} \\
		B_{3} \\
		B_{1} \\
		\end{pmatrix}
		+r(A_{4},\ A_{1}),
		\end{aligned}\\
		&\begin{aligned}\label{2e}
		&r
		\begin{pmatrix}
		B & A_{2} & A_{3} & A_{1} \\
		B_{4} & 0 & 0 & 0 \\
		B_{1} & 0 & 0 & 0 \\
		\end{pmatrix}
		=r(A_{2},\ A_{3},\ A_{1})+r
		\begin{pmatrix}
		B_{4} \\
		B_{1} \\
		\end{pmatrix},
		\end{aligned}\\
		&\begin{aligned}\label{2f}
		&r
		\begin{pmatrix}
		B & A_{2} & A_{1} \\
		B_{3} & 0 & 0 \\
		B_{4} & 0 & 0 \\
		B_{1} & 0 & 0 \\
		\end{pmatrix}
		=r
		\begin{pmatrix}
		B_{3} \\
		B_{4} \\
		B_{1} \\
		\end{pmatrix}
		+r(A_{2},\ A_{1}),\\
		\end{aligned}
			\end{align}
		\begin{align}
		&\begin{aligned}\label{2g}
		&r
		\begin{pmatrix}
		B & A_{3} & A_{1} \\
		B_{2} & 0 & 0 \\
		B_{4} & 0 & 0 \\
		B_{1} & 0 & 0 \\
		\end{pmatrix}
		=r\begin{pmatrix}
		B_{2} \\
		B_{4} \\
		B_{1} \\
		\end{pmatrix}
		+r(A_{3},\ A_{1}),\\
		\end{aligned}\\
		&\begin{aligned}\label{2h}
		&r
		\begin{pmatrix}
		B & A_{1} \\
		B_{2} & 0 \\
		B_{3} & 0 \\
		B_{4} & 0 \\
		B_{1} & 0 \\
		\end{pmatrix}
		=r
		\begin{pmatrix}
		B_{2} \\
		B_{3} \\
		B_{4} \\
		B_{1} \\
		\end{pmatrix}
		+r(A_{1}),
		\end{aligned}\\	
		&\begin{aligned}\label{2i}
		&r
		\begin{pmatrix}
		B & A_{2} & A_{1} & 0 & 0 & 0 & A_{4} \\
		B_{3} & 0 & 0 & 0 & 0 & 0 & 0 \\
		B_{1} & 0 & 0 & 0 & 0 & 0 & 0 \\
		0 & 0 & 0 & -B & A_{3} & A_{1} & A_{4} \\
		0 & 0 & 0 & B_{2} & 0 & 0 & 0 \\
		0 & 0 & 0 & B_{1} & 0 & 0 & 0 \\
		B_{4} & 0 & 0 & B_{4} & 0 & 0 & 0 \\
		\end{pmatrix}
		&\end{aligned}\\
		&\begin{aligned}\nonumber
		&=r\begin{pmatrix}
		B_{3} & 0 \\
		B_{1} & 0 \\
		0 & B_{2} \\
		0 & B_{1} \\
		B_{4} & B_{4} \\
		\end{pmatrix}+r
		\begin{pmatrix}
		A_{2} & A_{1} & 0 & 0 & A_{4} \\
		0 & 0 & A_{3} & A_{1} & A_{4} \\
		\end{pmatrix}.
		\end{aligned}
		\end{align}	
	\end{small}
\end{theorem}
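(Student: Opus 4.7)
The plan is a two-stage reduction followed by a rank reformulation. \textbf{Stage 1 (apply Lemma~\ref{lem2.5} with $Y_{3}$ as a parameter).} I would rewrite \eqref{eq1} as the four-unknown equation
$$A_{1}X_{1}+X_{2}B_{1}+A_{2}Y_{1}B_{2}+A_{3}Y_{2}B_{3}=B-A_{4}Y_{3}B_{4},$$
which is of the form \eqref{eq4} with right-hand side $B-A_{4}Y_{3}B_{4}$. A direct check shows that the auxiliary matrices prescribed by Lemma~\ref{lem2.5} coincide with those introduced in \eqref{21}: $A=A_{11}$, $B=B_{11}$, $C=A_{22}$, $D=B_{22}$, $M=M_{1}$, $N=N_{1}$, $S=S_{1}$, and $E=T_{1}-A_{33}Y_{3}B_{33}$. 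The four consistency conditions of Lemma~\ref{lem2.5} then translate to precisely the four two-sided equations $C_{i}Y_{3}D_{i}=E_{i}$ for $i=1,2,3,4$ defined in \eqref{22}. Hence \eqref{eq1} is solvable if and only if this system in the single matrix $Y_{3}$ is consistent.

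\textbf{Stage 2 (solve the $Y_{3}$-system via Lemma~\ref{lem3.1} plus Lemma~\ref{lem2.5}).} Since $C_{1}L_{C_{2}}=0=R_{D_{1}}D_{2}$ and $C_{3}L_{C_{4}}=0=R_{D_{3}}D_{4}$, Lemma~\ref{lem3.1} applies separately to the pairs $(1,2)$ and $(3,4)$, producing parametric solutions $Y_{3}=F_{1}+(\mathrm{free})$ and $Y_{3}=F_{2}+(\mathrm{free})$ with $F_{1},F_{2}$ from \eqref{24}; each pair is consistent if and only if $R_{C_{i}}E_{i}=0$ and $E_{i}L_{D_{i}}=0$ for $i$ in that pair. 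Matching the two parametric families forces the displacement $F:=F_{2}-F_{1}$ to decompose in the form
$$C_{11}U_{1}+U_{2}D_{11}+L_{C_{1}}U_{3}R_{D_{2}}-L_{C_{3}}U_{4}R_{D_{4}}=F,$$
which is a Sylvester-type equation of the shape \eqref{eq4} with $(A_{1},B_{1},C_{3},D_{3},C_{4},D_{4},E_{1})$ replaced by $(C_{11},D_{11},L_{C_{1}},R_{D_{2}},L_{C_{3}},R_{D_{4}},F)$. A second application of Lemma~\ref{lem2.5} then yields auxiliary matrices that agree with $E_{11},E_{22},E_{33},E_{44},M,N,S,E$ of \eqref{23}; modulo the eight rank identities already imposed, its sole remaining consistency condition reduces to $R_{E_{11}}EL_{E_{44}}=0$. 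This establishes $(1)\Leftrightarrow(2)$.

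\textbf{Rank reformulation $(2)\Leftrightarrow(3)$ and the main obstacle.} For each of the nine projector identities in \eqref{new}, I would substitute the definitions \eqref{21}--\eqref{24} and invoke Lemma~\ref{lem2} together with elementary rank-preserving block row and column operations to arrive at the corresponding block-matrix rank identity among \eqref{2a}--\eqref{2i}. The principal difficulty is twofold: first, one must verify that the compatibility conditions arising in Stage~2 from Lemma~\ref{lem3.1} within each pair, together with the ``additional'' Lemma~\ref{lem2.5} conditions beyond $R_{E_{11}}EL_{E_{44}}=0$ (such as $R_{M}R_{E_{11}}E=0$ and $EL_{E_{33}}L_{N}=0$), are all automatic under the eight rank identities $R_{C_{i}}E_{i}=0$ and $E_{i}L_{D_{i}}=0$; second, the subsequent translation of these nine conditions into the specific block-rank identities \eqref{2a}--\eqref{2i}---especially the large block appearing in \eqref{2i}---requires substantial but essentially routine bookkeeping with Lemma~\ref{lem2}.
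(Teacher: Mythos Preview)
Your proposal is correct and follows essentially the same route as the paper. The only cosmetic difference is in Stage~1: the paper first applies Lemma~\ref{lem2.3} to eliminate $X_{1},X_{2}$ (reducing to the three-term equation $A_{11}Y_{1}B_{11}+A_{22}Y_{2}B_{22}+A_{33}Y_{3}B_{33}=T_{1}$) and then Lemma~\ref{lem2.5} to eliminate $Y_{1},Y_{2}$, whereas you invoke Lemma~\ref{lem2.5} once on the full equation with $E_{1}=B-A_{4}Y_{3}B_{4}$; both routes land on the identical system $C_{i}Y_{3}D_{i}=E_{i}$ and the remainder of the argument (Lemma~\ref{lem3.1} on the pairs, matching via a second instance of Lemma~\ref{lem2.5}, redundancy of the extra conditions, and the rank translation via Lemma~\ref{lem2}) is the same.
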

\begin{proof}
	$(1)\Leftrightarrow (2)$ Equation \eqref{eq1} can be written as
	\begin{equation}\label{eq9}
	\begin{aligned}
	A_{1}X_{1}+X_{2}B_{1}=B-(A_{2}Y_{1}B_{2}+A_{3}Y_{2}B_{3}+A_{4}Y_{3}B_{4}).
	\end{aligned}
	\end{equation}
	
	Clearly, Equation \eqref{eq1} is solvable if and only if Equation \eqref{eq9} has a solution. By Lemma \ref{lem2.3}, Equation \eqref{eq9} is consistent if and only if there exist $Y_i\ (i=\overline{1,3})$ in Equation \eqref{eq9} such that
	\begin{equation}\label{eq10}
	\begin{aligned}
	R_{A_{1}}\left[B-(A_{2}Y_{1}B_{2}+A_{3}Y_{2}B_{3}+A_{4}Y_{3}B_{4})\right]L_{B_{1}}=0,\quad
	\end{aligned}
	\end{equation}
	i.e.,
	\begin{equation}\label{eq11}
	\begin{aligned}
	A_{11}Y_{1}B_{11}+A_{22}Y_{2}B_{22}+A_{33}Y_{3}B_{33}=T_{1},\qquad\qquad\quad\quad
	\end{aligned}
	\end{equation}
	where $A_{ii}$, $B_{ii}(i=\overline{1,3})$,
	and $T_{1}$ are defined by \eqref{21}. In addition, when Equation \eqref{eq9} has a solution, we get the following:
	\begin{align*}
	&\qquad\qquad\quad\quad\quad X_{1}=A_{1}^{\dagger}(B-A_{2}Y_{1}B_{2}-A_{3}Y_{2}B_{3}-A_{4}Y_{3}B_{4})-A_{1}^{\dagger}U_{1}B_{1}+L_{A_{1}}U_{2},\\
	&\qquad\qquad\quad\quad\quad X_{2}=R_{A_{1}}(B-A_{2}Y_{1}B_{2}-A_{3}Y_{2}B_{3}-A_{4}Y_{3}B_{4})B_{1}^{\dagger}+A_{1}A_{1}^{\dagger}U_{1}+U_{3}R_{B_{1}},
	\end{align*}
	where $U_{i}\ (i=\overline{1,3})$ are any matrices with appropriate dimensions over $\mathbb{H}$. Hence, Equation \eqref{eq9} has a solution if and only if there exist $Y_{i}\ (i=\overline{1,3})$ in Equation \eqref{eq9} such that Equation \eqref{eq11} is solvable. According to Equation \eqref{eq11}, we have that
	\begin{equation}\label{eq12}
	\begin{aligned}
	A_{11}Y_{1}B_{11}+A_{22}Y_{2}B_{22}=T_{1}-A_{33}Y_{3}B_{33}.\qquad\qquad\quad\quad
	\end{aligned}
	\end{equation}
	Hence, Equation \eqref{eq11} is consistent if and only if Equation \eqref{eq12} is solvable. It follows from Lemma \ref{lem2.5} that Equation \eqref{eq12} has a solution if and only if there exists $Y_3$ in \mbox{Equation \eqref{eq12}} such that
	\begin{equation}\label{eq13}
	\begin{aligned}
	&\qquad\qquad\quad\quad R_{M_{1}}R_{A_{11}}(A_{33}Y_{3}B_{33}-T_{1})=0,\ R_{A_{11}}(T_{1}-A_{33}Y_{3}B_{33})L_{B_{22}}=0,\\
	&\qquad\qquad\quad\quad R_{A_{22}}(T_{1}-A_{33}Y_{3}B_{33})L_{B_{11}}=0,\ (T_{1}-A_{33}Y_{3}B_{33})L_{B_{11}}L_{N_{1}}=0,
	\end{aligned}
	\end{equation}
	i.e.,
	\begin{equation}\label{eq16}
	\begin{aligned}
	C_{1}Y_{3}D_{1}=E_{1},\ C_{2}Y_{3}D_{2}=E_{2},\ C_{3}Y_{3}D_{3}=E_{3},\ C_{4}Y_{3}D_{4}=E_{4},
	\end{aligned}
	\end{equation}
	where $C_{i}$, $D_{i}$, $E_{i}\ (i=\overline{1,4})$ are defined by \eqref{22}. When Equation \eqref{eq12} is solvable, we have that
	\begin{align*}
	&\qquad\qquad \ \ Y_{1}=A_{11}^{\dagger}TB_{11}^{\dagger}-A_{11}^{\dagger}A_{22}M_{1}^{\dagger}TB_{11}^{\dagger}-A_{11}^{\dagger}S_{1}A_{22}^{\dagger}TN_{1}^{\dagger}B_{22}B_{11}^{\dagger}\\
	&\qquad\qquad \ \ -A_{11}^{\dagger}S_{1}U_{4}R_{N_{1}}B_{22}B_{11}^{\dagger}+L_{A_{11}}U_{5}+U_{6}R_{B_{11}},\\
	&\qquad\qquad \ \ Y_{2}=M_{1}^{\dagger}TB_{22}^{\dagger}+S_{1}^{\dagger}S_{1}A_{22}^{\dagger}TN_{1}^{\dagger}+L_{M_{1}}L_{S_{1}}U_{7}+U_{8}R_{B_{22}}+L_{M_{1}}U_{4}R_{N_{1}},
	\end{align*}
	where $A_{ii},\ B_{ii}\ (i=\overline{1,3})$,\ $M_{1},\ N_{1},\ S_{1},\ T_{1}$ are defined by \eqref{21}, $T=T_{1}-A_{33}Y_{3}B_{33}$ and $U_{j}\ (j=\overline{4,8})$ are any matrices with the appropriate dimensions  over $\mathbb{H}$.
	
	It is easy to infer that
	\begin{equation}\label{eq17}
	\begin{aligned}
	C_{1}L_{C_{2}}=0,\ R_{D_{1}}D_{2}=0,\ C_{3}L_{C_{4}}=0,\ R_{D_{3}}D_{4}=0.\qquad
	\end{aligned}
	\end{equation}
	Thus, according to Lemma \ref{lem3.1}, we have that the system \eqref{eq16} is consistent if and only if
	\begin{equation}\label{equ1}
	\begin{aligned}
	\qquad \quad \ R_{C_{i}}E_{i}=0,\ E_{i}L_{D_{i}}=0\ (i=1,2,3,4),\ R_{F_{11}}G_{1}=0,\ R_{F_{22}}G_{2}=0.
	\end{aligned}
	\end{equation}
	In this case, the general solution to system \eqref{eq16} can be expressed as
	\begin{equation}\label{eq18}
	\begin{aligned}
	Y_{3}=F_{1}+L_{C_{2}}V_{1}+V_{2}R_{D_{1}}+L_{C_{1}}V_{3}R_{D_{2}},\qquad \quad \ \qquad \quad \
	\end{aligned}
	\end{equation}
	\begin{equation}\label{eq19}
	\begin{aligned}
	Y_{3}=F_{2}-L_{C_{4}}W_{1}-W_{2}R_{D_{3}}-L_{C_{3}}W_{3}R_{D_{4}},\qquad \quad  \qquad \quad
	\end{aligned}
	\end{equation}
	where $F_{1}$, $F_{2}$ are defined by \eqref{24} and $V_{i},$ $W_{i}\ (i=\overline{1,3})$ are any matrices with the appropriate dimensions  over $\mathbb{H}$.
	Thus, system \eqref{eq16} has a solution if and only if \eqref{equ1} holds and there exist $V_{i}, W_{i}\ (i=\overline{1,3})$ such that \eqref{eq18} equals
	to \eqref{eq19}, namely
	\begin{align*}
	\qquad\quad\quad\quad	(L_{C_{2}},\ L_{C_{4}})
	\begin{pmatrix}
	V_{1} \\
	W_{1} \\
	\end{pmatrix}
	+(V_{2},\ W_{2})
	\begin{pmatrix}
	R_{D_{1}} \\
	R_{D_{3}} \\
	\end{pmatrix}+L_{C_{1}}V_{3}R_{D_{2}}+L_{C_{3}}W_{3}R_{D_{4}}=F,
	\end{align*}
	i.e.,
	\begin{equation}\label{eq20}
	\begin{aligned}
	C_{11}
	\begin{pmatrix}
	V_{1} \\
	W_{1} \\
	\end{pmatrix}
	+(V_{2},\ W_{2})D_{11}
	+C_{22}V_{3}D_{22}+C_{33}W_{3}D_{33}=F,
	\end{aligned}
	\end{equation}
where $F$, $C_{ii}$ and $D_{ii}\ (i=\overline{1,3})$ are defined by \eqref{23}. It follows from Lemma \ref{lem2.5} that \mbox{Equation~\eqref{eq20}} has a solution if and only if
	\begin{equation}\label{eq22}
	\begin{aligned}
	\qquad\quad\quad\qquad	R_{M}R_{E_{11}}E=0,\ EL_{E_{33}}L_{N}=0,\ R_{E_{11}}EL_{E_{44}}=0,\ R_{E_{22}}EL_{E_{33}}=0.
	\end{aligned}
	\end{equation}
	In this case, the general solution to  Equation \eqref{eq20} can be expressed as           
		\begin{align*}
		&\qquad \qquad \qquad \quad \quad V_{1}=(I_{m},\ 0)\left[C_{11}^{\dagger}(F-C_{22}V_{3}D_{22}-C_{33}W_{3}D_{33})-C_{11}^{\dagger}U_{11}D_{11}+L_{C_{11}}U_{12}\right],\\
		&\qquad \qquad \qquad \quad \quad W_{1}=(0,\ I_{m})\left[C_{11}^{\dagger}(F-C_{22}V_{3}D_{22}-C_{33}W_{3}D_{33})-C_{11}^{\dagger}U_{11}D_{11}+L_{C_{11}}U_{12}\right],\\
		&\qquad \qquad \qquad \quad \quad W_{2}=\left[R_{C_{11}}(F-C_{22}V_{3}D_{22}-C_{33}W_{3}D_{33})D_{11}^{\dagger}+C_{11}C_{11}^{\dagger}U_{11}+U_{21}R_{D_{11}}\right]
		\begin{pmatrix}
		0 \\
		I_{n} \\
		\end{pmatrix},\\
		&\qquad \qquad \qquad \quad \quad V_{2}=\left[R_{C_{11}}(F-C_{22}V_{3}D_{22}-C_{33}W_{3}D_{33})D_{11}^{\dagger}+C_{11 }(C_{11})^{\dagger}U_{11}+U_{21}R_{D_{11}}\right]
		\begin{pmatrix}
		I_{n} \\
		0 \\
		\end{pmatrix},\\
		&\qquad \qquad \qquad \quad \quad V_{3}=E_{11}^{\dagger}FE_{33}^{\dagger}-E_{11}^{\dagger}E_{22}M^{\dagger}FE_{33}^{\dagger}-E_{11}^{\dagger}SE_{22}^{\dagger}FN^{\dagger}E_{44}E_{33}^{\dagger}\\
		&\qquad \qquad \qquad \quad \quad -E_{11}^{\dagger}SU_{31}R_{N}E_{44}E_{33}^{\dagger}+L_{E_{11}}U_{32}+U_{33}R_{E_{33}},\\
		&\qquad \qquad \qquad \quad \quad W_{3}=M^{\dagger}FE_{44}^{\dagger}+S^{\dagger}SE_{22}^{\dagger}FN^{\dagger}+L_{M}L_{S}U_{41}+L_{M}U_{31}R_{N}-U_{42}R_{E_{44}},
		\end{align*}   
	where $U_{11}, U_{12}$, $U_{21}$, $U_{31}$, $U_{32}$, $U_{33}$, $U_{41}$,
	and $U_{42}$ are any matrices with the suitable dimensions over $\mathbb{H}$. $M$, $E$, $N$, $S$, $C_{11}$, $D_{11}$,
	and $E_{ii}\ (i=\overline{1,4})$ are defined by \eqref{23}, $m$ is the column number of $A_4$ and $n$ is the row number of $B_4$. %PLEASE CHECK.
	We summarize up that \eqref{eq11} has a solution if and only if \eqref{equ1} and \eqref{eq22} hold. Hence, Equation \eqref{eq1} is solvable if and only if \eqref{equ1} and~\eqref{eq22}~hold.
	
	In fact,  $R_{C_2}E_2=0,\ E_1L_{D_1}=0$ $\Rightarrow$ $R_{F_{11}}G_1=0$;  $R_{C_4}E_4=0,\ E_3L_{D_3}=0$ $\Rightarrow$ $R_{F_{22}}G_2=0$; $R_{C_3}E_3=0,\ E_1L_{D_1}=0$ $\Rightarrow$ $R_MR_{E_{11}}E=0$;  $R_{C_4}E_4=0,\ E_1L_{D_1}=0$ $\Rightarrow$ $EL_{E_{33}}L_{N}=0$; $R_{C_4}E_4=0,\ E_2L_{D_2}=0$ $\Rightarrow$ $R_{E_{22}}EL_{E_{33}}=0$. 
	The specific proof is as follows.
	
	Firstly, we prove that  $R_{C_2}E_2=0,\ E_1L_{D_1}=0$ $\Rightarrow$ $R_{F_{11}}G_1=0$;  $R_{C_4}E_4=0,\ E_3L_{D_3}=0$ $\Rightarrow$ $R_{F_{22}}G_2=0$. It follows from Lemma \ref{lem2} and elementary transformations that
	\begin{small}
		\begin{align}
		&\begin{aligned}\label{eq35}
		&R_{C_{1}}E_{1}=0\Leftrightarrow r(E_{1},\ C_{1})=r(C_{1})=r(CT_{1},\ CA_{33})=r(CA_{33})\ \Leftrightarrow \\
		& r(T_{1},\ A_{33},\ A_{11},\ A_{22})=r(A_{33},\ A_{11},\ A_{22}),
		\end{aligned}\\
		&\begin{aligned}\label{eq36}
		&R_{C_{2}}E_{2}=0\Leftrightarrow r(E_{2},\ C_{2})=r(C_{2}) \Leftrightarrow
		r\begin{pmatrix}
		T_{1} & A_{33} & A_{11} \\
		B_{22} & 0 & 0 \\
		\end{pmatrix}
		=r(A_{33} ,\ A_{11})+r(B_{22}),
		\end{aligned}\\
		&\begin{aligned}\label{eq37}
		&R_{C_{3}}E_{3}=0\Leftrightarrow r(E_{3},\ C_{3})=r(C_{3})\Leftrightarrow
		r\begin{pmatrix}
		T_{1} & A_{33} & A_{22} \\
		B_{11} & 0 & 0 \\
		\end{pmatrix}
		=r(A_{33} ,\ A_{22})+r(B_{11}),
		\end{aligned}\\
		&\begin{aligned}\label{eq38}
		&R_{C_{4}}E_{4}=0\Leftrightarrow r(E_{4},\ C_{4})=r(C_{4})\Leftrightarrow
		r\begin{pmatrix}
		T_{1} & A_{33} \\
		B_{11} & 0 \\
		B_{22} & 0 \\
		\end{pmatrix}
		=r(A_{33})+r
		\begin{pmatrix}
		B_{11} \\
		B_{22} \\
		\end{pmatrix}
		,
		\end{aligned}\\
		&\begin{aligned}\label{eq39}
		&E_{1}L_{D_{1}}=0\Leftrightarrow r
		\begin{pmatrix}
		E_{1} \\
		D_{1} \\
		\end{pmatrix}
		\Leftrightarrow r
		\begin{pmatrix}
		T_{1} & A_{11} & A_{22} \\
		B_{33} & 0 & 0 \\
		\end{pmatrix}
		=r(A_{11} ,\ A_{22})+r(B_{33}),
		\end{aligned}\\
		&\begin{aligned}\label{eq3.49}
		&E_{2}L_{D_{2}}=0\Leftrightarrow r
		\begin{pmatrix}
		E_{2} \\
		D_{2} \\
		\end{pmatrix}
		=r(D_{2})\Leftrightarrow r
		\begin{pmatrix}
		T_{1} & A_{11} \\
		B_{33} & 0 \\
		B_{22} & 0 \\
		\end{pmatrix}
		=r
		\begin{pmatrix}
		B_{33} \\
		B_{22} \\
		\end{pmatrix}
		+r(A_{11}),
		\end{aligned}\\
		&\begin{aligned}\label{eq40}
		& E_{3}L_{D_{3}}=0\Leftrightarrow  r
		\begin{pmatrix}
		E_{3} \\
		D_{3} \\
		\end{pmatrix}
		=r(D_{3})\Leftrightarrow r
		\begin{pmatrix}
		T_{1} & A_{22} \\
		B_{33} & 0 \\
		B_{11} & 0 \\
		\end{pmatrix}
		=r
		\begin{pmatrix}
		B_{33} \\
		B_{11} \\
		\end{pmatrix}
		+r(A_{22}),\quad\quad\quad
		\end{aligned}.
		\end{align}
		\begin{align}
		&\begin{aligned}\label{eq41}
		&E_{4}L_{D_{4}}=0\Leftrightarrow r
		\begin{pmatrix}
		E_{4} \\
		D_{4} \\
		\end{pmatrix}
		=r(D_{4})\Leftrightarrow r
		\begin{pmatrix}
		T_{1} \\
		B_{33} \\
		B_{11} \\
		B_{22} \\
		\end{pmatrix}
		=r
		\begin{pmatrix}
		B_{33} \\
		B_{11} \\
		B_{22} \\
		\end{pmatrix}
		.
		\end{aligned}
		\end{align}	
		
		It follows from Lemma \ref{lem3.1} and \eqref{eq17} that $R_{F_{11}}G_{1}=0$ and $R_{F_{22}}G_{2}=0$ are equivalent to
		\begin{equation}\label{eqc1}
		\begin{aligned}
		r
		\begin{pmatrix}
		E_{1} & 0 & C_{1} \\
		0 & -E_{2} & C_{2} \\
		D_{1} & D_{2} & 0 \\ 
		\end{pmatrix}
		=r
		\begin{pmatrix}
		C_{1}\\
		C_{2} \\
		\end{pmatrix}
		+r(D_{1},\ D_{2}),\quad\quad\quad\quad\quad\quad\quad\quad\quad\quad\quad\quad\quad\quad\quad\quad\quad\quad
		\end{aligned}
		\end{equation}
		\begin{equation}\label{eqc2}
		\begin{aligned}
		r
		\begin{pmatrix}
		E_{3} & 0 & C_{3} \\
		0 & -E_{4} & C_{4} \\
		D_{3} & D_{4} & 0 \\
		\end{pmatrix}
		=r
		\begin{pmatrix}
		C_{3}\\
		C_{4} \\
		\end{pmatrix}
		+r(D_{3},\ D_{4}).\quad\quad\quad\quad\quad\quad\quad\quad\quad\quad\quad\quad\quad\quad\quad\quad\quad\quad
		\end{aligned}
		\end{equation}	
		
		According to Lemma \ref{lem2}, we have that
		%MDPI:  Please check format of (46)
		\begin{equation}\label{eqc3}
		\begin{aligned}
		&\eqref{eqc1}\\
		&\Leftrightarrow r
		\begin{pmatrix}
		T_{1} & 0 & 0 & A_{11} & A_{22} & 0 \\
		0 & -T_{1} & A_{33} & 0 & 0 & A_{11} \\
		B_{33} & 0 & 0 & 0 & 0 & 0 \\
		0 & B_{22} & 0 & 0 & 0 & 0 \\
		\end{pmatrix}=r
		\begin{pmatrix}
		0 & A_{11} & A_{22} & 0 \\
		A_{33} & 0 & 0 & A_{11} \\
		\end{pmatrix}
		+r
		\begin{pmatrix}
		B_{33} & 0 \\
		0 & B_{22} \\
		\end{pmatrix}
		\\
		&\Leftrightarrow r
		\begin{pmatrix}
		T_{1} & A_{11} & A_{22} & 0 & 0 & 0 \\
		B_{33} & 0 & 0 & 0 & 0 & 0 \\
		0 & 0 & 0 & T_{1} & A_{33} & A_{11} \\
		0 & 0 & 0 & B_{22} & 0 & 0 \\
		\end{pmatrix}=r
		\begin{pmatrix}
		A_{11} & A_{22} & 0 & 0 \\
		0 & 0 & A_{33} & A_{11} \\
		\end{pmatrix}
		+r
		\begin{pmatrix}
		B_{33} & 0 \\
		0 & B_{22} \\
		\end{pmatrix}
		.
		\end{aligned}
		\end{equation}
	\end{small}	
	Thus, it follows from \eqref{eqc3} that \eqref{eqc1} holds when \eqref{eq36} and \eqref{eq39} hold. Similarly, if \eqref{eq38} and \eqref{eq40} hold, then \eqref{eqc2} holds.  
	
	Secondly,  we prove that  $R_{C_3}E_3=0,\ E_1L_{D_1}=0$ $\Rightarrow$ $R_MR_{E_{11}}E=0$;  $R_{C_4}E_4=0,\ E_1L_{D_1}=0$ $\Rightarrow$ $EL_{E_{33}}L_{N}=0$; $R_{C_4}E_4=0,\ E_2L_{D_2}=0$ $\Rightarrow$ $R_{E_{22}}EL_{E_{33}}=0$.
	According to Lemma \ref{lem2.5} and~\eqref{eq17}, we have that \eqref{eq22} are equivalent to
	\begin{equation}\label{eq23}
	\begin{aligned}
	r
	\begin{pmatrix}
	F & L_{C_{1}} & L_{C_{3}} \\
	R_{D_{1}} & 0 & 0 \\
	R_{D_{3}} & 0 & 0 \\
	\end{pmatrix}
	=r(L_{C_{1}},\ L_{C_{3}})+r
	\begin{pmatrix}
	R_{D_{1}} \\
	R_{D_{3}} \\
	\end{pmatrix}
	,
	\end{aligned}
	\end{equation}
	\begin{equation}\label{eq24}
	\begin{aligned}
	r
	\begin{pmatrix}
	F & L_{C_{2}} & L_{C_{4}} \\
	R_{D_{2}} & 0 & 0 \\
	R_{D_{4}} & 0 & 0 \\
	\end{pmatrix}
	=r(L_{C_{2}},\ L_{C_{4}})+r
	\begin{pmatrix}
	R_{D_{2}} \\
	R_{D_{4}} \\
	\end{pmatrix}
	,
	\end{aligned}
	\end{equation}
	\begin{equation}\label{eq25}
	\begin{aligned}
	r
	\begin{pmatrix}
	F & L_{C_{1}} & L_{C_{4}} \\
	R_{D_{1}} & 0 & 0 \\
	R_{D_{4}} & 0 & 0 \\
	\end{pmatrix}
	=r(L_{C_{1}},\ L_{C_{4}})+r
	\begin{pmatrix}
	R_{D_{1}} \\
	R_{D_{4}} \\
	\end{pmatrix}
	,
	\end{aligned}
	\end{equation}
	\begin{equation}\label{eq26}
	\begin{aligned}
	r
	\begin{pmatrix}
	F & L_{C_{2}} & L_{C_{3}} \\
	R_{D_{2}} & 0 & 0 \\
	R_{D_{3}} & 0 & 0 \\
	\end{pmatrix}
	=r(L_{C_{2}},\ L_{C_{3}})+r
	\begin{pmatrix}
	R_{D_{2}} \\
	R_{D_{3}} 
	\end{pmatrix}
	,
	\end{aligned}
	\end{equation}
	respectively. By Lemma \ref{lem2}, we have that
	%MDPI:  Please check format of (49)
	\begin{equation}\label{eq28}
	\begin{aligned}
	&\eqref{eq23}\\
	&\Leftrightarrow r
	\begin{pmatrix}
	F & I & I & 0 & 0 \\
	I & 0 & 0 & D_{1} & 0 \\
	I & 0 & 0 & 0 & D_{3} \\
	0 & C_{1} & 0 & 0 & 0 \\
	0 & 0 & C_{3} & 0 & 0 \\
	\end{pmatrix}=r
	\begin{pmatrix}
	I & D_{1} & 0 \\
	I & 0 & D_{3} \\
	\end{pmatrix}
	+r
	\begin{pmatrix}
	I & I \\
	C_{1} & 0 \\
	0 & C_{3} \\
	\end{pmatrix}\\	
	&\Leftrightarrow r
	\begin{pmatrix}
	E_{1} & 0 & C_{1} \\
	0 & -E_{3} & C_{3} \\
	D_{1} & D_{3} & 0 \\
	\end{pmatrix}
	=r
	\begin{pmatrix}
	C_{1} \\
	C_{3} \\
	\end{pmatrix}
	+r(D_{1},\ D_{3}).
	\end{aligned}
	\end{equation}
	Similarly, we can show that \eqref{eq24}--\eqref{eq26} are equivalent to
	\begin{equation}\label{eq29}
	\begin{aligned}
	r
	\begin{pmatrix}
	E_{1} & 0 & C_{1} \\
	0 & -E_{4} & C_{4} \\
	D_{1} & D_{4} & 0 \\
	\end{pmatrix}
	=r
	\begin{pmatrix}
	C_{1} \\
	C_{4} \\
	\end{pmatrix}
	+r(D_{1},\ D_{4}),
	\end{aligned}
	\end{equation}
	\begin{equation}\label{eq30}
	\begin{aligned}
	r
	\begin{pmatrix}
	E_{2} & 0 & C_{2} \\
	0 & -E_{3} & C_{3} \\
	D_{2} & D_{3} & 0 \\
	\end{pmatrix}
	=r
	\begin{pmatrix}
	C_{2} \\
	C_{3} \\
	\end{pmatrix}
	+r(D_{2},\ D_{3}),
	\end{aligned}
	\end{equation}
	\begin{equation}\label{eq31}
	\begin{aligned}
	r
	\begin{pmatrix}
	E_{2} & 0 & C_{2} \\
	0 & -E_{4} & C_{4} \\
	D_{2} & D_{4} & 0 \\
	\end{pmatrix}
	=r
	\begin{pmatrix}
	C_{2} \\
	C_{4} \\
	\end{pmatrix}
	+r(D_{2},\ D_{4}).
	\end{aligned}
	\end{equation}
	
	Substituting $C_i, D_i$, and $E_i\ (i=1,3)$ in \eqref{22} into the rank equality \eqref{eq28} and by Lemma \ref{lem2}, we have that
	%MDPI:  Please check format of (53)
	\begin{small}
		\begin{equation}\label{eq42}
		\begin{aligned}
		&\eqref{eq28}
		\\
		&\Leftrightarrow r
		\begin{pmatrix}
		T_{1} & 0 & 0 & A_{11} & A_{22} & 0 \\
		0 & -T_{1} & A_{33} & 0 & 0 & A_{22} \\
		B_{33} & 0 & 0 & 0 & 0 & 0 \\
		0 & B_{11} & 0 & 0 & 0 & 0 \\
		\end{pmatrix}=r
		\begin{pmatrix}
		0 & A_{11} & A_{22} & 0 \\
		A_{33} & 0 & 0 & A_{22} \\
		\end{pmatrix}
		+r
		\begin{pmatrix}
		B_{33} & 0 \\
		0 & B_{11} \\
		\end{pmatrix}\\
		&\Leftrightarrow r
		\begin{pmatrix}
		T_{1} & A_{11} & A_{22} & 0 & 0 & 0 \\
		B_{33} & 0 & 0 & 0 & 0 & 0 \\
		0 & 0 & 0 & T_{1} & A_{33} & A_{22} \\
		0 & 0 & 0 & B_{11} & 0 & 0 \\
		\end{pmatrix}=r
		\begin{pmatrix}
		A_{11} & A_{22} & 0 & 0 \\
		0 & 0 & A_{33} & A_{22} \\
		\end{pmatrix}
		+r
		\begin{pmatrix}
		B_{33} & 0 \\
		0 & B_{11} \\
		\end{pmatrix}
		.
		\end{aligned}
		\end{equation}
	\end{small}
Hence, it follows from \eqref{eq37} and \eqref{eq39} that \eqref{eq42} holds. Similarly, we can prove that when~\eqref{eq38},~\eqref{eq39} hold and \eqref{eq38}, \eqref{eq3.49} hold, we can get that \eqref{eq29} and \eqref{eq31} hold, respectively.
Thus, Equation \eqref{eq11} has a solution if and only if \eqref{new} holds. That is to say, Equation~\eqref{eq1} has a solution if and only if~\eqref{new} holds.

	$(2)\Leftrightarrow (3)$ We prove the equivalence in two parts. In the first part, we want to show that \eqref{eq35} to \eqref{eq41} are equivalent to \eqref{2a} to \eqref{2h}, respectively. In the second part, we want to show that \eqref{eq30} is equivalent to \eqref{2i}.
	
	Part 1. We want to show that \eqref{eq35} to \eqref{eq41} are equivalent to \eqref{2a} to \eqref{2h}, respectively. It follows from Lemma \ref{lem2} and elementary operations to  \eqref{eq35} that
	\begin{align*}
	%MDPI:  Please check format of (38)
	&\eqref{eq35}\Leftrightarrow r(R_{A_{1}}BL_{B_{11}},\ R_{A_{1}}A_{4},\ R_{A_{1}}A_{2},\ R_{A_{1}}A_{3})=r(R_{A_{1}}A_{4},\ R_{A_{1}}A_{2},\ R_{A_{1}}A_{3})\\
	&\Leftrightarrow r
	\begin{pmatrix}
	B & A_{4} & A_{2} & A_{3} & A_{1} \\
	B_{1} & 0 & 0 & 0 & 0 
	\end{pmatrix}
	=r(A_{4} ,\ A_{2} ,\ A_{3} ,\ A_{1})+r(B_{1})\Leftrightarrow (17). 
	\end{align*}
	
	Similarly, we can show that \eqref{eq36} to \eqref{eq38} are equivalent to \eqref{2b} to \eqref{2d}, respectively. 
	Now,
	we turn to prove that \eqref{eq39} is equivalent to \eqref{2c}. It follows from the Lemma \ref{lem2} and elementary transformations that
	%MDPI:  Please check format of (42)
	\begin{align*}
	&\eqref{eq39}\Leftrightarrow r
	\begin{pmatrix}
	R_{A_{1}}BL_{B_{1}} & R_{A_{1}}A_{2} & R_{A_{1}}A_{3} \\
	B_{4}L_{B_{1}} & 0 & 0 \\
	\end{pmatrix}
	=r( R_{A_{1}}A_{2} ,\ R_{A_{1}}A_{3})+r(B_{4}L_{B_{1}})\\
	&\Leftrightarrow r
	\begin{pmatrix}
	B & A_{2} & A_{3} & A_{1} \\
	B_{4} & 0 & 0 & 0 \\
	B_{1} & 0 & 0 & 0 \\
	\end{pmatrix}
	=r(A_{2} ,\ A_{3} ,\ A_{1})+r\begin{pmatrix}
	B_{4} \\
	B_{1} \\
	\end{pmatrix}\Leftrightarrow (21).       
	\end{align*}
	
	Similarly, we can show that \eqref{eq3.49} to \eqref{eq41} are equivalent to \eqref{2f} to \eqref{2h}. Hence, \eqref{eq35} to \eqref{eq41} are equivalent to \eqref{2a} to \eqref{2h}, respectively.
	
	Part 2. We want to show that $\eqref{eq30}\Leftrightarrow \eqref{2i}$. It follows from Lemma \ref{lem2} and elementary operations to \eqref{eq30} that
	\begin{small}%MDPI:  Please check format of (55)
		\begin{align*}
		&\eqref{eq30}\Leftrightarrow \\
		&\Leftrightarrow r\begin{pmatrix}
		R_{A_{11}}T_{1}L_{B_{22}} & 0 & R_{A_{11}}A_{33} \\
		0 & -R_{_{A_{22}}}T_{1}L_{B_{11}} & R_{A_{22}}A_{33} \\
		B_{33}L_{B_{22}} & B_{33}L_{B_{11}} & 0
		\end{pmatrix}=r
		\begin{pmatrix}
		R_{A_{11}}A_{33} \\
		R_{A_{22}}A_{33} \\
		\end{pmatrix}
		+r(B_{33}L_{B_{22}},\ B_{33}L_{B_{11}})\\
		&\Leftrightarrow r
		\begin{pmatrix}
		T_{1} & 0 & A_{11} & 0 & A_{33} \\
		0 & -T_{1} & 0 & A_{22} & A_{33} \\
		B_{22} & 0 & 0 & 0 & 0 \\
		0 & B_{11} & 0 & 0 & 0 \\
		B_{33} & B_{33} & 0 & 0 & 0 \\
		\end{pmatrix}=r
		\begin{pmatrix}
		B_{22} & 0 \\
		0 & B_{11} \\
		B_{33} & B_{33} \\
		\end{pmatrix}+r
		\begin{pmatrix}
		A_{11} & 0 & A_{33} \\
		0 & A_{22} & A_{33} \\
		\end{pmatrix}\\
		&\Leftrightarrow r	\begin{pmatrix}
		B_{3} & 0 \\
		0 & B_{2} \\
		B_{4} & B_{4} \\
		B_{1} & 0 \\
		0 & B_{1} \\
		\end{pmatrix}
		+r
		\begin{pmatrix}
		A_{2} & 0 & A_{4} & A_{1} & 0 \\
		0 & A_{3} & A_{4} & 0 & A_{1} \\
		\end{pmatrix}\\
		& =r\begin{pmatrix}
		B & 0 & A_{2} & 0 & A_{4} & A_{1} & 0 \\
		0 & -B & 0 & A_{3} & A_{4} & 0 & A_{1} \\
		B_{3} & 0 & 0 & 0 & 0 & 0 & 0 \\
		0 & B_{2} & 0 & 0 & 0 & 0 & 0 \\
		B_{4} & B_{4} & 0 & 0 & 0 & 0 & 0 \\
		B_{1} & 0 & 0 & 0 & 0 & 0 & 0 \\
		0 & B_{1} & 0 & 0 & 0 & 0 & 0 \\
		\end{pmatrix}\Leftrightarrow \eqref{2i}.
		\end{align*}
	\end{small}
	Hence, \eqref{eq35} to \eqref{eq41} and \eqref{eq30} are equivalent to \eqref{2a} to \eqref{2i}, respectively.
\end{proof}
Next, we give the formula of general solution to matrix Equation \eqref{eq1} by using 
Moore--Penrose. According to Theorem \ref{Thm1}, we get the following theorem:
\begin{theorem}\label{Thm2}
	Let matrix Equation \eqref{eq1} be solvable. 
	Then,
	the general solution to matrix Equation \eqref{eq1} can be expressed as
	\begin{align*}
	&X_{1}=A_{1}^{\dagger}(B-A_{2}Y_{1}B_{2}-A_{3}Y_{2}B_{3}-A_{4}Y_{3}B_{4})-A_{1}^{\dagger}U_{1}B_{1}+L_{A_{1}}U_{2},\\
	&X_{2}=R_{A_{1}}(B-A_{2}Y_{1}B_{2}-A_{3}Y_{2}B_{3}-A_{4}Y_{3}B_{4})B_{1}^{\dagger}+A_{1}A_{1}^{\dagger}U_{1}+U_{3}R_{B_{1}},\\
	&Y_{1}=A_{11}^{\dagger}TB_{11}^{\dagger}-A_{11}^{\dagger}A_{22}M_{1}^{\dagger}TB_{11}^{\dagger}-A_{11}^{\dagger}S_{1}A_{22}^{\dagger}TN_{1}^{\dagger}B_{22}B_{11}^{\dagger}\\
	&-A_{11}^{\dagger}S_{1}U_{4}R_{N_{1}}B_{22}B_{11}^{\dagger}+L_{A_{11}}U_{5}+U_{6}R_{B_{11}},\\
	&Y_{2}=M_{1}^{\dagger}TB_{22}^{\dagger}+S_{1}^{\dagger}S_{1}A_{22}^{\dagger}TN_{1}^{\dagger}+L_{M_{1}}L_{S_{1}}U_{7}+U_{8}R_{B_{22}}+L_{M_{1}}U_{4}R_{N_{1}},\\
	&Y_{3}=F_{1}+L_{C_{2}}V_{1}+V_{2}R_{D_{1}}+L_{C_{1}}V_{3}R_{D_{2}},\ or \ Y_{3}=F_{2}-L_{C_{4}}W_{1}-W_{2}R_{D_{3}}-L_{C_{3}}W_{3}R_{D_{4}},
	\end{align*}
	where $T=T_{1}-A_{33}Y_{3}B_{33}$, $U_{i}(i=\overline{1,8})$ are arbitrary matrices with appropriate sizes over $\mathbb{H}$,         
		\begin{align*}
		&V_{1}=(I_{m},\ 0)\left[C_{11}^{\dagger}(F-C_{22}V_{3}D_{22}-C_{33}W_{3}D_{33})-C_{11}^{\dagger}U_{11}D_{11}+L_{C_{11}}U_{12}\right],\quad \quad\quad \quad\quad\quad \quad \quad \quad \\
		&W_{1}=(0,\ I_{m})\left[C_{11}^{\dagger}(F-C_{22}V_{3}D_{22}-C_{33}W_{3}D_{33})-C_{11}^{\dagger}U_{11}D_{11}+L_{C_{11}}U_{12}\right],\\
		&W_{2}=\left[R_{C_{11}}(F-C_{22}V_{3}D_{22}-C_{33}W_{3}D_{33})D_{11}^{\dagger}+C_{11}C_{11}^{\dagger}U_{11}+U_{21}R_{D_{11}}\right]
		\begin{pmatrix}
		0 \\
		I_{n} \\
		\end{pmatrix}
		,\\
		&V_{2}=\left[R_{C_{11}}(F-C_{22}V_{3}D_{22}-C_{33}W_{3}D_{33})D_{11}^{\dagger}+C_{11}C_{11}^{\dagger}U_{11}+U_{21}R_{D_{11}}\right]
		\begin{pmatrix}
		I_{n} \\
		0 \\                                                       \end{pmatrix}                                                   ,\\
		&V_{3}=E_{11}^{\dagger}FE_{33}^{\dagger}-E_{11}^{\dagger}E_{22}M^{\dagger}FE_{33}^{\dagger}-E_{11}^{\dagger}SE_{22}^{\dagger}FN^{\dagger}E_{44}E_{33}^{\dagger}-E_{11}^{\dagger}SU_{31}R_{N}E_{44}E_{33}^{\dagger}+L_{E_{11}}U_{32}+U_{33}R_{E_{33}},\\
		&W_{3}=M^{\dagger}FE_{44}^{\dagger}+S^{\dagger}SE_{22}^{\dagger}FN^{\dagger}+L_{M}L_{S}U_{41}+L_{M}U_{31}R_{N}-U_{42}R_{E_{44}},
		\end{align*}        
	$U_{11}, U_{12}$, $U_{21}$, $U_{31}$, $U_{32}$, $U_{33}$, $U_{41}$,
	and $U_{42}$ are arbitrary matrices with appropriate sizes over $\mathbb{H}$, $m$ is the column number
	of $A_4$ and $n$ is the row number
	of $B_4$. 
\end{theorem}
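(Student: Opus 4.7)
The plan is to retrace the reduction chain already used in the proof of Theorem \ref{Thm1} and simply record the explicit parametric formulas that appear at each stage, since the consistency conditions have already been verified there. First I would rewrite Equation \eqref{eq1} as the Sylvester-type equation \eqref{eq9} in the unknowns $X_1,X_2$ with right-hand side $B-(A_2Y_1B_2+A_3Y_2B_3+A_4Y_3B_4)$, and apply \lemref{lem2.3} directly. This yields at once the expressions for $X_1$ and $X_2$ stated in Theorem \ref{Thm2}, with the three free parameters $U_1,U_2,U_3$.

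Next, as established in the proof of Theorem \ref{Thm1}, the residual consistency requirement on the pair $(X_1,X_2)$ is equivalent to the solvability of \eqref{eq11}, which after isolating the $Y_3$ term becomes Equation \eqref{eq12}. I would then apply \lemref{lem2.5} to \eqref{eq12}, viewed as a four-term equation of the form \eqref{eq4} in the unknowns $Y_1,Y_2$ with right-hand side $T=T_1-A_{33}Y_3B_{33}$. This immediately produces the formulas for $Y_1$ and $Y_2$ listed in the theorem, with the free parameters $U_4,\dots,U_8$.

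The remaining task is to parametrize $Y_3$. By the proof of Theorem \ref{Thm1}, $Y_3$ must satisfy the four-equation system \eqref{eq16}. Using the relations $C_1L_{C_2}=0$, $R_{D_1}D_2=0$, $C_3L_{C_4}=0$, $R_{D_3}D_4=0$ from \eqref{eq17}, I apply \lemref{lem3.1} twice (once to the pair $i=1,2$ and once to $i=3,4$), which gives the two equivalent parametric forms $Y_3=F_1+L_{C_2}V_1+V_2R_{D_1}+L_{C_1}V_3R_{D_2}$ and $Y_3=F_2-L_{C_4}W_1-W_2R_{D_3}-L_{C_3}W_3R_{D_4}$ stated in the conclusion. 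Equating these two expressions yields Equation \eqref{eq20}, which is itself a Sylvester-type equation of the form \eqref{eq4} in the six unknowns $V_1,W_1,V_2,W_2,V_3,W_3$. A final application of \lemref{lem2.5} to \eqref{eq20} produces the explicit closed-form expressions for $V_1,W_1,V_2,W_2,V_3,W_3$ in terms of the arbitrary parameters $U_{11},U_{12},U_{21},U_{31},U_{32},U_{33},U_{41},U_{42}$ exactly as displayed in Theorem \ref{Thm2}.

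The only non-trivial issue is bookkeeping. No new solvability condition is introduced, because the proof of Theorem \ref{Thm1} has already shown that the consistency requirements arising at each reduction step collapse to \eqref{new} (equivalently to the rank equalities \eqref{2a}--\eqref{2i}), which hold by hypothesis. The main obstacle is therefore the careful verification that each cascade of parametrizations is genuinely surjective onto the full solution set: each $U_i$, $V_i$, $W_i$ really ranges freely, so that the displayed formulas describe every solution of \eqref{eq1} rather than only a subfamily. This surjectivity for the innermost layers is exactly what is asserted by \lemref{lem2.3}, \lemref{lem2.5}, and \lemref{lem3.1}, and it then lifts back through the reduction by substitution, completing the proof.
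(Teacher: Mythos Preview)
Your proposal is correct and follows essentially the same route as the paper: Theorem \ref{Thm2} is not given a separate proof there, since all the displayed formulas for $X_1,X_2,Y_1,Y_2,Y_3$ and $V_i,W_i$ are already derived step by step inside the proof of Theorem \ref{Thm1} via exactly the reduction chain \eqref{eq9}$\to$\eqref{eq11}$\to$\eqref{eq12}$\to$\eqref{eq16}$\to$\eqref{eq20} and the successive applications of Lemmas \ref{lem2.3}, \ref{lem2.5}, and \ref{lem3.1} that you describe. Your remark on surjectivity is the right point to flag, and it is handled precisely as you say, by inheriting the ``general solution'' assertions of those lemmas.
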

\section*{Algorithm with a Numerical Example}

In this section, we give 
Algorithm \ref{alg1} with a numerical example to illustrate the main results.

\begin{algorithm} 
	Algorithm for computing the general solution of Equation \eqref{eq1} \label{alg1}
	
	(1)\quad Input the quaternion matrices $A_{i}, B_{i}\ (i=\overline{1,4})$ and $B$ with conformable shapes.
	
	(2)\quad Compute all matrices given by \eqref{21}--\eqref{24}.
	
	(3)\quad Check equalities in \eqref{new} or \eqref{2a}--\eqref{2i}. If not, it returns inconsistent.
	
	(4)\quad Else, compute $X_{i}\ Y_{j} (i=\overline{1,2},\ j=\overline{1,3})$.
\end{algorithm}
\begin{example} Consider the matrix Equation \eqref{eq1}. Put
	\begin{align*}
	&A_{1}=
	\begin{pmatrix}
	\mathbf{i} & 0 \\
	0 & 0 \\
	\end{pmatrix}
	,\ B_{1}=
	\begin{pmatrix}
	0 & \mathbf{i} \\
	0 & 0 \\
	\end{pmatrix}
	,\ A_{2}=
	\begin{pmatrix}
	0 & 0 \\
	\mathbf{i} & 0 \\
	\end{pmatrix}
	,\ B_{2}=
	\begin{pmatrix}
	0 & 0 \\
	0 & \mathbf{i} \\
	\end{pmatrix}
	,A_{3}=
	\begin{pmatrix}
	1 & \mathbf{i} \\
	0 & 0 \\
	\end{pmatrix}
	,\\
	&B_{3}=
	\begin{pmatrix}
	1 & \mathbf{j} \\
	0 & 0 \\
	\end{pmatrix}
	,\ A_{4}=
	\begin{pmatrix}
	1 & \mathbf{k} \\
	0 & 0 \\
	\end{pmatrix}
	,\ B_{4}=
	\begin{pmatrix}
	0 & 0 \\
	\mathbf{k} & \mathbf{i} \\
	\end{pmatrix}                                                                                                                   ,\ B=
	\begin{pmatrix}
	3\mathbf{i} & \mathbf{i}-1 \\
	0 & \mathbf{j}  
	\end{pmatrix}
	.
	\end{align*}
	Computation directly yields
	\begin{align*}
	&r
	\begin{pmatrix}
	B & A_{2} & A_{3} & A_{4} & A_{1} \\
	B_{1} & 0 & 0 & 0 & 0 \\
	\end{pmatrix}
	=r(B_{1})+r(A_{2},\ A_{3},\ A_{4},\ A_{1})=3,\\
	&r
	\begin{pmatrix}
	B & A_{2} & A_{4} & A_{1} \\
	B_{3} & 0 & 0 & 0 \\
	B_{1} & 0 & 0 & 0 \\
	\end{pmatrix}
	=r(A_{2},\ A_{4},\ A_{1})+r
	\begin{pmatrix}
	B_{3} \\
	B_{1} \\
	\end{pmatrix}
	=4,\\
	&r
	\begin{pmatrix}
	B & A_{3} & A_{4} & A_{1} \\
	B_{2} & 0 & 0 & 0 \\
	B_{1} & 0 & 0 & 0 \\
	\end{pmatrix}
	=r(A_{3},\ A_{4},\ A_{1})+r
	\begin{pmatrix}
	B_{2} \\
	B_{1} \\
	\end{pmatrix}
	=4,\\
	&r
	\begin{pmatrix}
	B & A_{4} & A_{1} \\
	B_{2} & 0 & 0 \\
	B_{3} & 0 & 0 \\
	B_{1} & 0 & 0 \\
	\end{pmatrix}
	=r
	\begin{pmatrix}
	B_{2} \\
	B_{3} \\
	B_{1} \\
	\end{pmatrix}
	+r(A_{4},\ A_{1})=3,\\
	&r
	\begin{pmatrix}
	B & A_{2} & A_{3} & A_{1} \\
	B_{4} & 0 & 0 & 0 \\
	B_{1} & 0 & 0 & 0 \\
	\end{pmatrix}
	=r(A_{2},\ A_{3},\ A_{1})+r
	\begin{pmatrix}
	B_{4} \\
	B_{1} \\
	\end{pmatrix}
	=4,\\
	&r
	\begin{pmatrix}
	B & A_{2} & A_{1} \\
	B_{3} & 0 & 0 \\
	B_{4} & 0 & 0 \\
	B_{1} & 0 & 0 \\
	\end{pmatrix}
	=r
	\begin{pmatrix}
	B_{3} \\
	B_{4} \\
	B_{1} \\
	\end{pmatrix}
	+r(A_{2},\ A_{1})=3,\\
	&r
	\begin{pmatrix}
	B & A_{3} & A_{1} \\
	B_{2} & 0 & 0 \\
	B_{4} & 0 & 0 \\
	B_{1} & 0 & 0 \\
	\end{pmatrix}
	=r
	\begin{pmatrix}
	B_{2} \\
	B_{4} \\
	B_{1} \\
	\end{pmatrix}
	+r(A_{3},\ A_{1})=3, \\
	&r
	\begin{pmatrix}
	B & A_{1} \\
	B_{2} & 0 \\
	B_{3} & 0 \\
	B_{4} & 0 \\
	B_{1} & 0 \\
	\end{pmatrix}
	=r
	\begin{pmatrix}
	B_{2} \\
	B_{3} \\
	B_{4} \\
	B_{1} \\
	\end{pmatrix}
	+r(A_{1})=3,\\
	&r
	\begin{pmatrix}
	B & A_{2} & A_{1} & 0 & 0 & 0 & A_{4} \\
	B_{3} & 0 & 0 & 0 & 0 & 0 & 0 \\
	B_{1} & 0 & 0 & 0 & 0 & 0 & 0 \\
	0 & 0 & 0 & -B & A_{3} & A_{1} & A_{4} \\
	0 & 0 & 0 & B_{2} & 0 & 0 & 0 \\
	0 & 0 & 0 & B_{1} & 0 & 0 & 0 \\
	B_{4} & 0 & 0 & B_{4} & 0 & 0 & 0 \\
	\end{pmatrix}
	=r
	\begin{pmatrix}
	B_{3} & 0 \\
	B_{1} & 0 \\
	0 & B_{2} \\
	0 & B_{1} \\
	B_{4} & B_{4} 
	\end{pmatrix}
	+r
	\begin{pmatrix}
	A_{2} & A_{1} & 0 & 0 & A_{4} \\
	0 & 0 & A_{3} & A_{1} & A_{4} \\
	\end{pmatrix}
	=7.
	\end{align*}
	All rank equalities in $\eqref{2a}$ to $\eqref{2i}$ hold. Hence, according to Theorem \ref{Thm1}, 
	Equation \eqref{eq1} has a solution. Moreover, by Theorem \ref{Thm2}, we have that
	
	\begin{align*}
	&X_{1}=
	\begin{pmatrix}
	1 & \mathbf{i} \\
	0 & 0 \\
	\end{pmatrix}
	,\ X_{2}=
	\begin{pmatrix}
	1 &\mathbf{ j} \\
	0 & 0 \\
	\end{pmatrix}
	,\ Y_{1}=
	\begin{pmatrix}
	\mathbf{i }& \mathbf{j} \\
	0 & 0 \\
	\end{pmatrix}
	,\ Y_{2}=
	\begin{pmatrix}
	\mathbf{i} & \mathbf{k} \\
	0 & 0 \\
	\end{pmatrix}
	,\ Y_{3}=
	\begin{pmatrix}
	\mathbf{i} & \mathbf{j} \\
	\mathbf{k} & 0 \\
	\end{pmatrix}.	
	\end{align*}
\end{example}

\begin{remark} %MDPI: Changed format for Remark, please confirm
	Chu et al. gave potential applications of the maximal and minimal ranks in the discipline of control theory (e.g., \cite{D.L. 1998,D.L. 2000,D.L. 2009}). We may consider the rank bounds of the general solution of 
	Equation \eqref{eq1}.
\end{remark}

%%%%%%%%%%%%%%%%%%%%%%%%%%%%%%%%%%%%%%%%%%
\section{The General Solution to equation with \boldmath{$\eta$}-Hermicity \label{sec4}}
%MDPI: Equation shouldn't be cited in title, please REMOVE

In this section, as an application of \eqref{eq1}, we establish some necessary and sufficient conditions for quaternion matrix Equation \eqref{eq5} to have a solution and derive a formula of its general solution involving $\eta$-Hermicity.
\begin{theorem}\label{the4.1} Let $ A_{i}$ $(i=\overline{1,4})$ and $B$  be given matrices with suitable sizes over $\mathbb{H}$, $B=B^{\eta^{\ast}}$. Set\begingroup\makeatletter\def\f@size{9}\check@mathfonts
	\def\maketag@@@#1{\hbox{\m@th\normalsize \normalfont#1}}%
	\begin{align*}
	&R_{A_{1}}A_{2}=A_{11},\ R_{A_{1}}A_{3}=A_{22},\ R_{A_{1}}A_{4}=A_{33},\ R_{A_{11}}A_{22}=M_{1},\ S_{1}=A_{22}L_{M_{1}},\\
	&R_{A_{1}}B(R_{A_{1}})^{\eta^{\ast}}=T_{1},\  C=R_{M_{1}}R_{A_{11}},\ C_{1}=CA_{33},\ C_{2}=R_{A_{11}}A_{33},\\
	&C_{3}=R_{A_{22}}A_{33},\ C_{4}=A_{33},\ E_{1}=CT_{1},\ E_{2}=R_{A_{11}}T_{1}(R_{A_{22}})^{\eta^{\ast}},\ E_{3}=R_{A_{22}}T_{1}(R_{A_{11}})^{\eta^{\ast}},\ E_{4}=T_{1}C^{\eta^{\ast}},\\
	&C_{11}=(L_{C_{2}},\ L_{C_{4}}),\ C_{22}=L_{C_{1}},\ C_{33}=L_{C_{3}},\ E_{11}=R_{C_{11}}C_{22},\ E_{22}=R_{C_{11}}C_{33},\\
	&M=R_{E_{11}}E_{22},\ N=(R_{E_{22}}E_{11})^{\eta^{\ast}},\ F=F_{2}-F_{1},\ E=R_{C_{11}}F(R_{C_{11}})^{\eta^{\ast}},\ S=E_{22}L_{M},\\
	&F_{11}=C_{2}L_{C_{1}},\ G_{1}=E_{2}-C_{2}C_{1}^{\dagger}E_{1}(C_{4}^{\eta^{\ast}})^{\dagger}C_{3}^{\eta^{\ast}},\ F_{22}=C_{4}L_{C_{3}},\ G_{2}=E_{4}-C_{4}C_{3}^{\dagger}E_{3}(C_{2}^{\eta^{\ast}})^{\dagger}C_{1}^{\eta^{\ast}},\\
	&F_{1}=C_{1}^{\dagger}E_{1}(C_{4}^{\eta^{\ast}})^{\dagger}+L_{C_{1}}C_{2}^{\dagger}E_{2}(C_{3}^{\eta^{\ast}})^{\dagger},\ F_{2}=C_{3}^{\dagger}E_{3}(C_{2}^{\eta^{\ast}})^{\dagger}+L_{C_{3}}C_{4}^{\dagger}E_{4}(C_{1}^{\eta^{\ast}})^{\dagger}.
	\end{align*}
	\endgroup
	Then, the following statements are equivalent:
	
	$\mathrm{(1)}$ Equation \eqref{eq5}  is consistent.
	
	$\mathrm{(2)}$  $R_{C_{i}}E_{i}=0$  $(i=\overline{1,4})$,\ $R_{E_{22}}E(R_{E_{22}})^{\eta^{\ast}}=0$.
	
	$\mathrm{(3)}$
	\begin{align*}
	&r
	\begin{pmatrix}
	B & A_{2} & A_{3} & A_{4} & A_{1} \\
	A_{1}^{\eta^{\ast}} & 0 & 0 & 0 & 0 \\
	\end{pmatrix}
	=r(A_{1})+r(A_{2},\ A_{3},\ A_{4},\ A_{1}),\\
	&r
	\begin{pmatrix}
	B & A_{2} & A_{3} & A_{1} \\
	A_{4}^{\eta^{\ast}} & 0 & 0 & 0 \\
	A_{1}^{\eta^{\ast}} & 0 & 0 & 0 \\
	\end{pmatrix}
	=r(A_{2},\ A_{3},\ A_{1})+r(A_{4},\ A_{1}),\\
	&r
	\begin{pmatrix}
	B & A_{2} & A_{4} & A_{1} \\
	A_{3}^{\eta^{\ast}} & 0 & 0 & 0 \\
	A_{1}^{\eta^{\ast}} & 0 & 0 & 0 \\
	\end{pmatrix}
	=r(A_{2},\ A_{4},\ A_{1})+r(A_{3},\ A_{1}),\\
	&r
	\begin{pmatrix}
	B & A_{3} & A_{4} & A_{1} \\
	A_{2}^{\eta^{\ast}} & 0 & 0 & 0 \\
	A_{1}^{\eta^{\ast}} & 0 & 0 & 0 \\
	\end{pmatrix}
	=r(A_{3},\ A_{4},\ A_{1})+r(A_{2},\ A_{1}),
	\end{align*}
	\begin{align*}
	&r
	\begin{pmatrix}
	B & 0 & A_{2} & 0 & A_{4} & A_{1} & 0 \\
	0 & -B & 0 & A_{3} & A_{4} & 0 & A_{1} \\
	A_{3}^{\eta^{\ast}} & 0 & 0 & 0 & 0 & 0 & 0 \\
	0 & A_{2}^{\eta^{\ast}} & 0 & 0 & 0 & 0 & 0 \\
	A_{4}^{\eta^{\ast}} & A_{4}^{\eta^{\ast}} & 0 & 0 & 0 & 0 & 0 \\
	A_{1}^{\eta^{\ast}} & 0 & 0 & 0 & 0 & 0 & 0 \\
	0 & A_{1}^{\eta^{\ast}} & 0 & 0 & 0 & 0 & 0 \\
	\end{pmatrix}	
	=2r\begin{pmatrix}
	A_{2} & 0 & A_{4} & A_{1} & 0 \\
	0 & A_{3} & A_{4} & 0 & A_{1} \\
	\end{pmatrix}.
	\end{align*}
	In this case, the general solution to Equation \eqref{eq5} can be expressed as\begingroup\makeatletter\def\f@size{9}\check@mathfonts
	\def\maketag@@@#1{\hbox{\m@th\normalsize \normalfont#1}}%
	\begin{align*}
	&X_{1}=\frac{\widehat{X_{1}}+(\widehat{X_{2}})^{\eta^{\ast}}}{2},\ Y_{1}=\frac{\widehat{Y_{1}}+(\widehat{Y_{1}})^{\eta^{\ast}}}{2},\ Y_{2}=\frac{\widehat{Y_{2}}+(\widehat{Y_{2}})^{\eta^{\ast}}}{2},\ Y_{3}=\frac{\widehat{Y_{3}}+(\widehat{Y_{3}})^{\eta^{\ast}}}{2},\\
	&\widehat{X_{1}}=A_{1}^{\dagger}(C_{1}-A_{2}Y_{1}A_{2}^{\eta^{\ast}}-A_{3}Y_{2}A_{3}^{\eta^{\ast}}-A_{4}Y_{3}A_{4}^{\eta^{\ast}})+L_{A_{1}}U_{2},\\
	&\widehat{X_{2}}=R_{A_{1}}(C_{1}-A_{2}Y_{1}A_{2}^{\eta^{\ast}}-A_{3}Y_{2}A_{3}^{\eta^{\ast}}-A_{4}Y_{3}A_{4}^{\eta^{\ast}})(A_{1}^{\dagger})^{\eta^{\ast}}+A_{1}A_{1}^{\dagger}U_{1}+U_{3}R_{A_{1}^{\eta^{\ast}}},\\
	&\widehat{Y_{1}}=A_{11}^{\dagger}T(A_{11}^{\dagger})^{\eta^{\ast}}-A_{11}^{\dagger}A_{22}M_{1}^{\dagger}T(A_{11}^{\dagger})^{\eta^{\ast}}-A_{11}^{\dagger}U_{4}A_{22}^{\dagger}T(M_{1}^{\dagger})^{\eta^{\ast}}(A_{22}^{\dagger})^{\eta^{\ast}}+L_{A_{11}}U_{5}+U_{6}R_{A_{11}^{\eta^{\ast}}},\\
	&\widehat{Y_{2}}=M_{1}^{\dagger}T(A_{22}^{\dagger})^{\eta^{\ast}}+S_{1}^{\dagger}S_{1}A_{22}^{\dagger}T(M_{1}^{\dagger})^{\eta^{\ast}}+L_{M_{1}}L_{S_{1}}U_{7}+U_{8}R_{A_{22}^{\eta^{\ast}}}+L_{M_{1}}U_{4}R_{M_{1}^{\eta^{\ast}}},\\
	&\widehat{Y_{3}}=F_{1}+L_{C_{2}}V_{1}+V_{2}R_{C_{4}^{\eta^{\ast}}}+L_{C_{1}}V_{3}R_{C_{3}^{\eta^{\ast}}},\ or\ \widehat{Y_{3}}=F_{2}-L_{C_{4}}W_{1}-W_{2}R_{C_{2}^{\eta^{\ast}}}-L_{C_{3}}W_{3}R_{C_{1}^{\eta^{\ast}}},
	\end{align*}
	\endgroup
	where $T=T_{1}-A_{33}Y_{3}(A_{33})^{\eta^{\ast}}$,
	\begin{align*}
	&V_{1}=(I_{m},\ 0)\left[C_{11}^{\dagger}(F-C_{22}V_{3}C_{33}^{\eta^{\ast}}-C_{33}W_{3}C_{22}^{\eta^{\ast}})-C_{11}^{\dagger}U_{11}C_{11}^{\eta^{\ast}}+L_{C_{11}}U_{12}\right],\\
	&W_{1}=(0,\ I_{m})\left[C_{11}^{\dagger}(F-C_{22}V_{3}C_{33}^{\eta^{\ast}}-C_{33}W_{3}C_{22}^{\eta^{\ast}})-C_{11}^{\dagger}U_{11}C_{11}^{\eta^{\ast}}+L_{C_{11}}U_{12}\right],\\
	&W_{2}=\left[R_{C_{11}}(F-C_{22}V_{3}C_{33}^{\eta^{\ast}}-C_{33}W_{3}C_{22}^{\eta^{\ast}})(C_{11}^{\eta^{\ast}})^{\dagger}+C_{11}C_{11}^{\dagger}U_{11}+U_{21}L_{C_{11}}^{\eta^{\ast}}\right]
	\begin{pmatrix}
	0 \\
	I_{n} \\
	\end{pmatrix}
	,\\
	&V_{2}=\left[R_{C_{11}}(F-C_{22}V_{3}C_{33}^{\eta^{\ast}}-C_{33}W_{3}C_{22}^{\eta^{\ast}})(C_{11}^{\eta^{\ast}})^{\dagger}+C_{11}C_{11}^{\dagger}U_{11}+U_{21}L_{C_{11}}^{\eta^{\ast}}\right]
	\begin{pmatrix}
	I_{n} \\
	0 \\
	\end{pmatrix},\\
	&V_{3}=E_{11}^{\dagger}F(E_{22}^{\eta^{\ast}})^{\dagger}-E_{11}^{\dagger}E_{22}M^{\dagger}F(E_{22}^{\eta^{\ast}})^{\dagger}-E_{11}^{\dagger}SE_{22}^{\dagger}FN^{\dagger}E_{11}^{\eta^{\ast}}(E_{22}^{\eta^{\ast}})^{\dagger}\\
	&-E_{11}^{\dagger}SU_{31}R_{N}E_{11}^{\eta^{\ast}}(E_{22}^{\eta^{\ast}})^{\dagger}+L_{E_{11}}U_{32}+U_{33}L_{E_{22}}^{\eta^{\ast}},\\
	&W_{3}=M^{\dagger}F(E_{11}^{\eta^{\ast}})^{\dagger}+S^{\dagger}SE_{22}^{\dagger}FN^{\dagger}+L_{M}L_{S}U_{41}+L_{M}U_{31}R_{N}-U_{42}L_{E_{11}}^{\eta^{\ast}},\quad\quad\quad\quad\quad\quad\quad\quad\quad
	\end{align*}
	$U_{11}, U_{12}$, $U_{21}$, $U_{31}$, $U_{32}$, $U_{33}$, $U_{41}$,
	and $U_{42}$ are any matrices with suitable dimensions over $\mathbb{H}$.
\end{theorem}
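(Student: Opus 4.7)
The plan is to view Equation \eqref{eq5} as the special case of Equation \eqref{eq1} obtained by the substitutions $B_{1}=A_{1}^{\eta^{\ast}}$, $B_{2}=A_{2}^{\eta^{\ast}}$, $B_{3}=A_{3}^{\eta^{\ast}}$, $B_{4}=A_{4}^{\eta^{\ast}}$, together with the additional requirements $X_{2}=X_{1}^{\eta^{\ast}}$ and $Y_{i}=Y_{i}^{\eta^{\ast}}$ $(i=\overline{1,3})$. Under these substitutions the term $X_{2}B_{1}$ of \eqref{eq1} becomes $X_{1}^{\eta^{\ast}}A_{1}^{\eta^{\ast}}=(A_{1}X_{1})^{\eta^{\ast}}$, and $A_{i}Y_{i-1}B_{i}$ becomes $A_{i}Y_{i-1}A_{i}^{\eta^{\ast}}$, so a solution of \eqref{eq5} is precisely an $\eta$-Hermitian solution of \eqref{eq1} with these coefficient choices. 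Consequently, it suffices to (i) apply Theorems \ref{Thm1} and \ref{Thm2} to this specialised \eqref{eq1} to obtain unrestricted solvability, and then (ii) show that unrestricted solvability implies the existence of an $\eta$-Hermitian solution by a symmetrisation argument.

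For step (ii), the key observation is that if $(\widehat{X_{1}},\widehat{X_{2}},\widehat{Y_{1}},\widehat{Y_{2}},\widehat{Y_{3}})$ is any solution to the specialised \eqref{eq1}, then applying $(\cdot)^{\eta^{\ast}}$ and using $B=B^{\eta^{\ast}}$ together with $(MNP)^{\eta^{\ast}}=P^{\eta^{\ast}}N^{\eta^{\ast}}M^{\eta^{\ast}}$ yields another solution $(\widehat{X_{2}}^{\eta^{\ast}},\widehat{X_{1}}^{\eta^{\ast}},\widehat{Y_{1}}^{\eta^{\ast}},\widehat{Y_{2}}^{\eta^{\ast}},\widehat{Y_{3}}^{\eta^{\ast}})$. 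Averaging the two and setting
\begin{equation*}
X_{1}=\tfrac{1}{2}\bigl(\widehat{X_{1}}+\widehat{X_{2}}^{\eta^{\ast}}\bigr),\qquad Y_{i}=\tfrac{1}{2}\bigl(\widehat{Y_{i}}+\widehat{Y_{i}}^{\eta^{\ast}}\bigr),
\end{equation*}
produces a tuple satisfying $X_{2}=X_{1}^{\eta^{\ast}}$ and $Y_{i}=Y_{i}^{\eta^{\ast}}$, and so solves \eqref{eq5}. The explicit formulas for $\widehat{X_{i}},\widehat{Y_{j}}$ stated in the theorem are then read off directly from Theorem \ref{Thm2}, with the specialisations $B_{i}=A_{i}^{\eta^{\ast}}$ forcing $L_{B_{i}}=R_{A_{i}^{\eta^{\ast}}}=L_{A_{i}}^{\eta^{\ast}}$ and analogously for the projectors and Moore--Penrose inverses via Lemma \ref{lemnew}.

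For step (i), the nine rank conditions \eqref{2a}--\eqref{2i} of Theorem \ref{Thm1} must be reduced to the five listed here. Conjugation via $(\cdot)^{\eta^{\ast}}$, combined with Lemma \ref{lemnew}(2) (which preserves rank), makes several of them redundant: \eqref{2a} and \eqref{2h} become equivalent, \eqref{2b} and \eqref{2f} become equivalent, \eqref{2c} and \eqref{2e} become equivalent, while \eqref{2d}, \eqref{2g}, \eqref{2i} are self-conjugate (the block matrix in \eqref{2i} is turned into its own $\eta^{\ast}$-transpose after a block permutation). The rank equality in \eqref{2i} will then collapse to a single identity whose right-hand side is $2\,r\bigl(\begin{smallmatrix}A_{2} & 0 & A_{4} & A_{1} & 0\\ 0 & A_{3} & A_{4} & 0 & A_{1}\end{smallmatrix}\bigr)$, using $r(M)=r(M^{\eta^{\ast}})$ on each block column. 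Similarly, conditions \eqref{new} collapse: each equality $R_{C_{i}}E_{i}=0$ is self-dual under $\eta^{\ast}$, and the pair $R_{E_{11}}EL_{E_{44}}=0$, $R_{E_{22}}EL_{E_{33}}=0$ fuses into $R_{E_{22}}E(R_{E_{22}})^{\eta^{\ast}}=0$ after checking $E_{44}=E_{11}^{\eta^{\ast}}$, $E_{33}=E_{22}^{\eta^{\ast}}$, $E=E^{\eta^{\ast}}$, and $N=(R_{E_{22}}E_{11})^{\eta^{\ast}}$ from the redefined data.

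The main obstacle I expect is the bookkeeping in the last paragraph, namely tracing through the definitions in \eqref{21}--\eqref{24} after the substitutions $B_{i}=A_{i}^{\eta^{\ast}}$ and $B=B^{\eta^{\ast}}$ to confirm the identifications $D_{i}=C_{5-i}^{\eta^{\ast}}$, $E_{4}=E_{1}^{\eta^{\ast}}$, $E_{3}=E_{2}^{\eta^{\ast}}$, $N_{1}=M_{1}^{\eta^{\ast}}$, etc., so that the auxiliary matrices $F_{1},F_{2},E_{11},E_{22},M,N,S,F$ in Theorem \ref{the4.1} are consistent with those in Theorem \ref{Thm1}. Once these identifications are verified, the duplicated rank conditions and formulas match pairwise, and the proof is completed by the symmetrisation described above.
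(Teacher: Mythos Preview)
Your approach is essentially the same as the paper's: specialise Theorem~\ref{Thm1}/\ref{Thm2} by setting $B_{i}=A_{i}^{\eta^{\ast}}$, then use the averaging (symmetrisation) argument to pass between unrestricted solutions of \eqref{eq1} and $\eta$-Hermitian solutions of \eqref{eq5}. The paper's own proof is in fact even more terse than yours---it only writes out the symmetrisation step and leaves the collapse of the nine rank conditions to five entirely implicit.

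One small correction in your bookkeeping: the specific pairings you list are not quite right. Under $(\cdot)^{\eta^{\ast}}$ one finds \eqref{2a}$\leftrightarrow$\eqref{2h}, \eqref{2b}$\leftrightarrow$\eqref{2g}, \eqref{2c}$\leftrightarrow$\eqref{2f}, \eqref{2d}$\leftrightarrow$\eqref{2e}, and only \eqref{2i} is self-conjugate (after a block permutation). So \eqref{2d} and \eqref{2g} are \emph{not} self-conjugate as you state; each is dual to one of the conditions you already retain. This does not affect the strategy, and you will catch it immediately when you carry out the bookkeeping you anticipate.
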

\begin{proof}
	It is easy to show that \eqref{eq5}  has a solution if and only if the following matrix equation has a solution:
	\begin{equation}\label{eq4.2}
	\begin{aligned}
	A_{1}\widehat{X_{1}}+\widehat{X_{2}}A_{1}^{\eta^{\ast}}+A_{2}\widehat{Y_{1}}A_{2}^{\eta^{\ast}}+A_{3}\widehat{Y_{2}}A_{3}^{\eta^{\ast}}+A_{4}\widehat{Y_{3}}A_{4}^{\eta^{\ast}}=B.
	\end{aligned}
	\end{equation}
	If \eqref{eq5}  has a solution,  say,
	$(X_{1},\ Y_{1},\ Y_{2},\ Y_{3})$, then
	$$
	(\widehat{X_{1}},\ \widehat{X_{2}},\ \widehat{Y_{1}},\ \widehat{Y_{2}},\ \widehat{Y_{3}}):=(X_{1},\ X_{1}^{\eta^{\ast}},\ Y_{1},\ Y_{2},\ Y_{3})
	$$
	is a solution of \eqref{eq4.2}. 
	Conversely, if \eqref{eq4.2}  has a solution, say
	$$
	(\widehat{X_{1}},\ \widehat{X_{2}},\ \widehat{Y_{1}},\ \widehat{Y_{2}},\ \widehat{Y_{3}}).
	$$ % Attention AE: please check if something is missing here.
	It is easy to show that \eqref{eq5} has a solution %PLEASE CHECK.
	\begin{align*}
	(X_{1},\ Y_{1},\ Y_{2},\ Y_{3}):=\left( \frac{\widehat{X_{1}}+(\widehat{X_{2}})^{\eta^{\ast}}}{2},\ \frac{\widehat{Y_{1}}+(\widehat{Y_{1}})^{\eta^{\ast}}}{2},\ \frac{\widehat{Y_{2}}+(\widehat{Y_{2}})^{\eta^{\ast}}}{2},\ \frac{\widehat{Y_{3}}+(\widehat{Y_{3}})^{\eta^{\ast}}}{2}\right) .
	\end{align*}\end{proof}
Letting $A_{1}$ and $B_{1}$ vanish in Theorem \ref{Thm1}, it  yields to the
following result. %PLEASE CHECK.

\begin{corollary}
	Let $A_{ii},$ $B_{ii}$  $(i=\overline{1,3})$,
	and $T_{1}$ be given matrices with appropriate sizes over $\mathbb{H}$. Set
	\begin{align*}
	&M_{1}=R_{A_{11}}A_{22},\ N_{1}=B_{22}L_{B_{11}},\ S_{1}=A_{22}L_{M_{1}},\\
	&C=R_{M_{1}}R_{A_{11}},\ C_{1}=CA_{33},\ C_{2}=R_{A_{11}}A_{33},\ C_{3}=R_{A_{22}}A_{33},\ C_{4}=A_{33},\\
	&D=L_{B_{11}}L_{N_{1}},\ D_{1}=B_{33},\ D_{2}=B_{33}L_{B_{22}},\ D_{3}=B_{33}L_{B_{11}},\ D_{4}=B_{33}D,\\
	&E_{1}=CT_{1},\ E_{2}=R_{A_{11}}T_{1}L_{B_{22}},\ E_{3}=R_{A_{22}}T_{1}L_{B_{11}},\ E_{4}=T_{1}D,\\
	&C_{11}=(L_{C_{2}},\ L_{C_{4}}),\ D_{11}=
	\begin{pmatrix}
	R_{D_{1}} \\
	R_{D_{3}} \\
	\end{pmatrix}
	,\ C_{22}=L_{C_{1}},\ D_{22}=R_{D_{2}},\ C_{33}=L_{C_{3}},\\
	&D_{33}=R_{D_{4}},\ E_{11}=R_{C_{11}}C_{22},\ E_{22}=R_{C_{11}}C_{33},\ E_{33}=D_{22}L_{D_{11}},\ E_{44}=D_{33}L_{D_{11}},\\
	&M=R_{E_{11}}E_{22},\ N=E_{44}L_{E_{33}},\ F=F_{2}-F_{1},\ E=R_{C_{11}}FL_{D_{11}},\ S=E_{22}L_{M},\\
	&F_{11}=C_{2}L_{C_{1}},\ G_{1}=E_{2}-C_{2}C_{1}^{\dagger}E_{1}D_{1}^{\dagger}D_{2},\ F_{22}=C_{4}L_{C_{3}},\  G_{2}=E_{4}-C_{4}C_{3}^{\dagger}E_{3}D_{3}^{\dagger}D_{4},\\
	&F_{1}=C_{1}^{\dagger}E_{1}D_{1}^{\dagger}+L_{C_{1}}C_{2}^{\dagger}E_{2}D_{2}^{\dagger},\ F_{2}=C_{3}^{\dagger}E_{3}D_{3}^{\dagger}+L_{C_{3}}C_{4}^{\dagger}E_{4}D_{4}^{\dagger}.
	\end{align*}
	Then, the following statements are equivalent:
	
	$\mathrm{(1)}$ Equation \eqref{eq6}  is consistent.
	
	$\mathrm{(2)}$ $R_{C_{i}}$$E_{i}=0$,\ $E_{i}$$L_{D_{i}}=0$ $(i=\overline{1,4})$,\ $R_{E_{22}}EL_{E_{33}}=0$.
	
	$\mathrm{(3)}$
	\begin{align*}
	&r(T_{1},\ A_{11},\ A_{22},\ A_{33})=r(A_{11},\ A_{22},\ A_{33}),\\
	& r
	\begin{pmatrix}
	T_{1} \\
	B_{11} \\
	B_{22} \\
	B_{33} \\
	\end{pmatrix}
	=r
	\begin{pmatrix}
	B_{11} \\
	B_{22} \\
	B_{33} \\
	\end{pmatrix}
	,\ r
	\begin{pmatrix}
	T_{1} & A_{11} & A_{22} \\
	B_{33} & 0 & 0 \\
	\end{pmatrix}
	=r(A_{11},\ A_{22})+r(B_{33}),\\
	& r
	\begin{pmatrix}
	T_{1} & A_{11} & A_{33} \\
	B_{22} & 0 & 0 \\
	\end{pmatrix}
	=r(A_{11},\ A_{33})+r(B_{22}),\\
	&r
	\begin{pmatrix}
	T_{1} & A_{33} & A_{22} \\
	B_{11} & 0 & 0 \\
	\end{pmatrix}
	=r(A_{33},\ A_{22})+r(B_{11}),\ r
	\begin{pmatrix}
	T_{1} & A_{33} \\
	B_{11} & 0 \\
	B_{22} & 0 \\
	\end{pmatrix}
	=r
	\begin{pmatrix}
	B_{11} \\
	B_{22} \\
	\end{pmatrix}
	+r(A_{33}),\\
	\end{align*}
	\begin{align*}
	\footnotesize
	&r
	\begin{pmatrix}
	T_{1} & 0 & A_{11} & 0 & A_{33} \\
	0 & -T_{1} & 0 & A_{22} & A_{33} \\
	B_{22} & 0 & 0 & 0 & 0 \\
	0 & B_{11} & 0 & 0 & 0 \\
	B_{33} & B_{33} & 0 & 0 & 0 \\
	\end{pmatrix}
	=r
	\begin{pmatrix}
	B_{22} & 0 \\
	0 & B_{11} \\
	B_{33} & B_{33} \\
	\end{pmatrix}
	+r
	\begin{pmatrix}
	A_{11} & 0 & A_{33} \\
	0 & A_{22} & A_{33} \\
	\end{pmatrix},\\
	&r
	\begin{pmatrix}
	T_{1} & A_{22} \\
	B_{11} & 0 \\
	B_{33} & 0 \\
	\end{pmatrix}
	=r
	\begin{pmatrix}
	B_{11} \\
	B_{33} \\
	\end{pmatrix}
	+r(A_{22}),\ r
	\begin{pmatrix}
	T_{1} & A_{11} \\
	B_{33} & 0 \\
	B_{22} & 0 \\
	\end{pmatrix}
	=r
	\begin{pmatrix}
	B_{33} \\
	B_{22} \\
	\end{pmatrix}
	+r(A_{11}).
	\end{align*}
	In this case, the general solution to Equation \eqref{eq6} can be expressed as
	\begin{align*}
	&Y_{1}=A_{11}^{\dagger}TB_{11}^{\dagger}-A_{11}^{\dagger}A_{22}M_{1}^{\dagger}TB_{11}^{\dagger}-A_{11}^{\dagger}S_{1}A_{22}^{\dagger}TN_{1}^{\dagger}B_{22}B_{11}^{\dagger}\\
	&-A_{11}^{\dagger}S_{1}U_{4}R_{N_{1}}B_{22}B_{11}^{\dagger}+L_{A_{11}}U_{5}+U_{6}R_{B_{11}},\\
	&Y_{2}=M_{1}^{\dagger}TB_{22}^{\dagger}+S_{1}^{\dagger}S_{1}A_{22}^{\dagger}TN_{1}^{\dagger}+L_{M_{1}}L_{S_{1}}U_{7}+U_{8}R_{B_{22}}+L_{M_{1}}U_{4}R_{N_{1}},\\
	&Y_{3}=F_{1}+L_{C_{2}}V_{1}+V_{2}R_{D_{1}}+L_{C_{1}}V_{3}R_{D_{2}},\ or \ Y_{3}=F_{2}-L_{C_{4}}W_{1}-W_{2}R_{D_{3}}-L_{C_{3}}W_{3}R_{D_{4}},
	\end{align*}
	where $T=T_{1}-A_{33}Y_{3}B_{33}$, $U_{i}(i=1,...,8)$ are any matrices with suitable dimensions over $\mathbb{H}$,\begingroup\makeatletter\def\f@size{9}\check@mathfonts
	\def\maketag@@@#1{\hbox{\m@th\normalsize \normalfont#1}}%
	\begin{align*}
	&V_{1}=(I_{m},\ 0)\left[C_{11}^{\dagger}(F-C_{22}V_{3}D_{22}-C_{33}W_{3}D_{33})-C_{11}^{\dagger}U_{11}D_{11}+L_{C_{11}}U_{12}\right],\\
	&W_{1}=(0,\ I_{m})\left[C_{11}^{\dagger}(F-C_{22}V_{3}D_{22}-C_{33}W_{3}D_{33})-C_{11}^{\dagger}U_{11}D_{11}+L_{C_{11}}U_{12}\right],\\
	&W_{2}=\left[R_{C_{11}}(F-C_{22}V_{3}D_{22}-C_{33}W_{3}D_{33})D_{11}^{\dagger}+C_{11}C_{11}^{\dagger}U_{11}+U_{21}R_{D_{11}}\right]
	\begin{pmatrix}
	0 \\
	I_{n} \\
	\end{pmatrix},\\
	&V_{2}=\left[R_{C_{11}}(F-C_{22}V_{3}D_{22}-C_{33}W_{3}D_{33})D_{11}^{\dagger}+C_{11}C_{11}^{\dagger}U_{11}+U_{21}R_{D_{11}}\right]
	\begin{pmatrix}
	I_{n} \\
	0 \\
	\end{pmatrix}
	,\\
	&V_{3}=E_{11}^{\dagger}FE_{33}^{\dagger}-E_{11}^{\dagger}E_{22}M^{\dagger}FE_{33}^{\dagger}-E_{11}^{\dagger}SE_{22}^{\dagger}FN^{\dagger}E_{44}E_{33}^{\dagger}-E_{11}^{\dagger}SU_{31}R_{N}E_{44}E_{33}^{\dagger}+L_{E_{11}}U_{32}+U_{33}R_{E_{33}},\\
	&W_{3}=M^{\dagger}FE_{44}^{\dagger}+S^{\dagger}SE_{22}^{\dagger}FN^{\dagger}+L_{M}L_{S}U_{41}+L_{M}U_{31}R_{N}-U_{42}R_{E_{44}},
	\end{align*}
	$U_{11}, U_{12}$, $U_{21}$, $U_{31}$, $U_{32}$, $U_{33}$, $U_{41}$,
	and $U_{42}$ are any matrices with suitable dimensions over $\mathbb{H}$.
	
	\endgroup
\end{corollary}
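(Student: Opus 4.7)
The plan is to obtain this corollary as a direct specialization of Theorem \ref{Thm1} and Theorem \ref{Thm2} by setting $A_{1}=0$ and $B_{1}=0$. Under this choice $R_{A_{1}}=I$ and $L_{B_{1}}=I$, so the Sylvester block $A_{1}X_{1}+X_{2}B_{1}$ in Equation \eqref{eq1} vanishes identically and the unknowns $X_{1},X_{2}$ drop out. Equation \eqref{eq1} therefore reduces exactly to Equation \eqref{eq6} once we relabel the right-hand side $B$ as $T_{1}$. The compound matrices in \eqref{21} simplify in the obvious way: $R_{A_{1}}A_{k+1}$ becomes $A_{k+1}$ (and so coincides with the $A_{kk}$ of the corollary), $B_{k+1}L_{B_{1}}$ becomes $B_{k+1}$, and $R_{A_{1}}BL_{B_{1}}$ becomes $T_{1}$. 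Consequently the secondary data $M_{1},N_{1},S_{1},C_{i},D_{i},E_{i},C_{jj},D_{jj},E_{kk},M,N,S,F,E,F_{11},F_{22},G_{i},F_{1},F_{2}$ defined in \eqref{22}--\eqref{24} match the list given in the statement of the corollary verbatim.

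With this identification the equivalence $(1)\Leftrightarrow(2)$ is inherited directly from Theorem \ref{Thm1}, since the chain of reductions in its proof (applying Lemma \ref{lem2.5} to split off $Y_{3}$, then Lemma \ref{lem3.1} twice to reduce the resulting four-equation system in $Y_{3}$ to a Sylvester-type matrix equation, and finally Lemma \ref{lem2.5} once more to solve it) involves only the auxiliary matrices built from $C_{i},D_{i},E_{i}$, none of which reference $A_{1}$ or $B_{1}$. The equivalence $(2)\Leftrightarrow(3)$ is obtained by specializing the nine rank equalities \eqref{2a}--\eqref{2i}: using $r(A_{1})=r(B_{1})=0$ the trivial $A_{1}$ and $B_{1}$ columns and rows collapse away, so for instance \eqref{2a} becomes $r(T_{1},A_{11},A_{22},A_{33})=r(A_{11},A_{22},A_{33})$, \eqref{2h} becomes the stacked equality comparing $\begin{pmatrix}T_{1}\\B_{11}\\B_{22}\\B_{33}\end{pmatrix}$ with $\begin{pmatrix}B_{11}\\B_{22}\\B_{33}\end{pmatrix}$, and the remaining seven equalities reduce in the same way to the seven singleton rank equalities in statement (3), while the large block equality \eqref{2i} collapses to the corresponding block rank equality involving the stacked matrix with blocks $B_{22},B_{11},B_{33}$ and the generalized coefficient block. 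The formula for the general solution is read off Theorem \ref{Thm2}, discarding the now-vacuous formulas for $X_{1},X_{2}$ and retaining those for $Y_{1},Y_{2},Y_{3}$ verbatim.

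The main step requiring attention is the bookkeeping of the rank equalities, together with the check that the implication chain at the end of the proof of Theorem \ref{Thm1} (reducing the four rank conditions produced by the second application of Lemma \ref{lem2.5} to the single independent condition appearing in statement (2)) remains valid after specialization. Since those implications are established using only row and column operations on the blocks $C_{i},D_{i},E_{i}$ together with Lemma \ref{lem2}, and since none of these manipulations invoke $A_{1}$ or $B_{1}$, they transfer without change. Once this routine verification is carried out, the corollary follows from Theorem \ref{Thm1} and Theorem \ref{Thm2} with no further computation.
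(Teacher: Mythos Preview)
Your proposal is correct and mirrors the paper's own justification exactly: the paper states only that the corollary follows by ``letting $A_{1}$ and $B_{1}$ vanish in Theorem \ref{Thm1}'', and you have spelled out precisely this specialization together with the routine bookkeeping that the paper leaves implicit.
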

\section{Conclusions \label{sec5}}

We have established the solvability conditions and an
exact formula of a general solution to  quaternion matrix Equation \eqref{eq1}. As an application of Equation \eqref{eq1}, we also have established some necessary and sufficient conditions for Equation \eqref{eq5} to have a solution and derived a formula of its general solution involving
$\eta$-Hermicity. The quaternion matrix Equation \eqref{eq1} plays a key role in studying the solvability conditions and general solutions of other types of matrix equations.
For example, we can use the results on Equation \eqref{eq1} to investigate the solvability conditions and the general solution of the following system of quaternion matrix equations
\begin{align*}
&A_{2}Y_{1}=C_{2},\ Y_{1}B_{2}=D_{2},\\
&A_{3}Y_{2}=C_{3},\ Y_{2}B_{3}=D_{3},\\
&A_{4}Y_{3}=C_{4},\ Y_{3}B_{4}=D_{4},\\
G_{1}Y_{1}&H_{1}+G_{2}Y_{2}H_{2}+G_{3}Y_{3}H_{3}=G
\end{align*}
where  $Y_{1}, Y_{2}$ and $Y_{3}$ are unknown quaternion matrices and the others are given. 

It is worth mentioning
that the main results of \eqref{eq1}  are available over not only  
$\mathbb{R}$ and $\mathbb{C}$ but also any division ring.  %PLEASE CHECK.
Moreover, inspired by \cite{Li 2022}, we can investigate 
Equation \eqref{eq1} in tensor form.

\end{document}